\newcommand{\field}[1]{\mathbb{#1}} 
\newcommand{\Z}{\field{Z}} 
\newcommand{\R}{\field{R}} 
\newcommand{\C}{\field{C}} 
\newcommand{\bbC}{\field{C}} 
\newcommand{\bbR}{\field{R}} 
\newcommand{\Log}{\mathrm{Log}}
\newcommand{\sgn}{\mathrm{sgn}}
\def\bbQ{\field{Q}}
\def\bbH{\mathbb{H}}
\def\bbZ{\mathbb{Z}}
\def\I{{i}}
\renewcommand{\(}{\left(}
\renewcommand{\)}{\right)}
\newtheorem{theorem}{Theorem}[section]
\newtheorem{lemma}[theorem]{Lemma}
\newtheorem{proposition}[theorem]{\textbf{Proposition}} 
\newtheorem{corollary}[theorem]{Corollary}
\theoremstyle{remark}
\newtheorem*{remark}{Remark}
\theoremstyle{definition}
\newtheorem{definition}{\textbf{Definition}} 
\def\sddprod{{\vrule  height6pt depth0pt width0.4pt \! \times}}
\title{Massive Deformations of Maass forms and Jacobi forms}
\author{Marcus Berg}
\author{Kathrin Bringmann}
\author{Terry  Gannon}
\address{Marcus Berg, Department of Physics,  Karlstad University,  651 88 Karlstad, Sweden}
\email{marcus.berg@kau.se}
\address{Kathrin Bringmann, Department of Mathematics and Computer Science, University of Cologne, Weyertal 86-90, 50931 Cologne, Germany}
\email{kbringma@math.uni-koeln.de}
\address{Terry Gannon, Department of Mathematics, University of Alberta, Edmonton, Canada T6G 2G1}
\email{tjgannon@ualberta.ca}
\thanks{The research of the second author is supported by the Alfried Krupp Prize for Young University Teachers of the Krupp foundation. This project has received funding from the European Research Council (ERC) under the European Union's Horizon 2020 research and innovation programme (grant agreement No. 101001179). The research of the third author is supported in part by an NSERC Discovery Grant.}
\begin{document}
\begin{abstract}
We define  one-parameter ``massive'' deformations of Maass forms
and Jacobi forms. This is inspired
by  descriptions of plane gravitational waves in string theory. Examples include
massive Green's functions (that we write in terms of Kronecker--Eisenstein series) and massive modular graph functions. 
\end{abstract}
\maketitle

\section{Introduction and statement of results}
This note introduces a class of deformations of some classical objects (Maass forms and real-analytic
Jacobi forms), that naturally arise in string theory and in condensed-matter theory. It has been known for a long time that string amplitudes from surfaces (worldsheets) in flat spacetime transform ``nicely'' with respect to the mapping class group of the surface. For example, torus chiral blocks produce so-called vector-valued modular forms for the full modular group SL$_2(\bbZ)$. The plane (gravitational) wave  is a natural one-parameter deformation of flat spacetime. Any 
spacetime reduces to this type of spacetime in a specific limit \cite{Penrose}. This should lead to one-parameter deformations of e.g.\  (vector-valued)
modular forms, which is of independent interest in mathematics.
Similarly, in \cite{SI}, it was discussed how the statistical mechanics of theories at a critical point
can allow interesting one-parameter deformations away from criticality. 
All the above work in physics raises the question on how special these deformations are and whether one can develop a mathematical theory of these deformations that is somehow parallel
to that of modular or automorphic forms. 

The precursor to our classes of functions are some open string amplitudes which Bergman, Gaberdiel, and Green \cite{BGG} computed in the plane wave background.  
For example, for $t\in\mathbb R^+$ 
and $m \in \R$,  define
\begin{align}\label{Fm}
\mathcal F_m(t) &:=e^{-2 \pi c_m t}\left(1-e^{-2 \pi m t}\right)^{\frac12}\prod_{n\ge1} \left(1-e^{-2 \pi t \sqrt{m^2+n^2}}\right),
\end{align}
where 
\begin{align*}
c_{m} &:=\frac1{(2\pi)^2} \sum_{\ell\geq 1}\int_0^\infty e^{-\ell^2x-\frac{\pi^2m^2}{x}}dx\; . 
\end{align*}
The authors of \cite{BGG} established in Appendix A the following inversion formula:
\begin{equation}\label{Translaw}
\mathcal F_m (t)=\mathcal F_{mt}\left(\tfrac{1}{t}\right)\,.
\end{equation}
 Moreover, 
 they showed that $\lim_{m \rightarrow 0 } c_m = \tfrac{1}{24}$ and hence 
we may view  $\frac{\mathcal F_m(t)}{\sqrt{2\pi t m}}$ as 
a one-parameter deformation of the \textit{Dedekind eta function} given by $\eta(\tau) :=q^{\frac{1}{24}}\prod_{n\geq1} (1-q^n)$ ($q:=e^{2\pi i\tau}$), i.e.,
we obtain $\lim_{m\to 0}  \frac{\mathcal F_m(t)}{\sqrt{2\pi mt}}=\eta(it)$. Here, $m$ represents spacetime curvature,
thus $m\rightarrow 0$ is the flat-space limit and this is called a \emph{massive deformation}. 

It is remarkable that such a complicated-looking deformation of the Dedekind eta function preserves the inversion property of $ \eta. $ According to string theory, this occurs because the space of complex structures (up to equivalence) on an annulus is naturally parametrized by $t\in\bbR^+$ modulo the inversion $t\mapsto \frac 1t$. This suggests that to get deformations of modular forms which retain modularity with respect to the full modular group SL$_2(\bbZ)$, one should look at closed string (torus) amplitudes in the plane wave background. Some of these were computed by e.g.\ Takayanagi
\cite{Ta}; we 
 list those amplitudes, and then repackage them in  Section 2 into our prototypical example $\mathcal E_{1,\mu}(z;\tau)$. Our paper originated with the challenge of finding a natural mathematical interpretation  for  these torus amplitudes. In Section \ref{massivefunctions}, by studying the special properties of $\mathcal E_{1,\mu}(z;\tau)$, we define the new classes of functions. 

\noindent $\bullet$ A
{\it massive Maass form} is a smooth function $f_\mu(\tau)$ on $(\tau,\mu)\in
\bbH\times \bbR^+$ which, for each fixed $\mu$, transforms like a  modular form,
has at most polynomial growth
towards the cusps, and is annihilated by some differential operator, given in \eqref{1stDOf}. 

\noindent $\bullet$ A {\it massive Maass--Jacobi form} is a smooth  function $\phi_{\mu}(z;\tau)$ on $(z,\tau,\mu)\in\bbC\times \bbH
\times\bbR^+$ which,  for each fixed $\mu$ and $z$, transforms like a Jacobi form, has at most polynomial growth
towards the cusps, and which is annihilated by 
two  differential operators, defined in \eqref{2ndDOF} and \eqref{3rdDOF}.

In Section 3.2 we construct families of examples, and  and study them in Section 4.
Recently there has been extensive work in the physics literature on related
topics, some of which we review in Section \ref{compare}.

\section*{Acknowledgements}
The authors thank the referees for their useful comments.

\section{String theory torus amplitudes revisited}

\subsection{The plane wave torus amplitudes}
According to string theory,  the functions $\mathcal F_m$ given in \eqref{Fm} live on the moduli space of the cylinder, which is why it respects modular inversion but not translation. To find something
closer to a modular form, we  instead consider closed string amplitudes on a torus.

One of the most basic objects in physics is the one-loop partition function. This can 
be computed by functional determinants, as reviewed for example in Section 13.4.1 of \cite{book}. 
We begin with the partition function of a fermion (with twisted boundary conditions)
in a plane gravitational wave, as in equation (2.37) of  \cite{Sugawara:2002rs}
\begin{multline}  \label{Zmuz}
Z_{\alpha,\beta,m}\left(\tau\right) \\
:= e^{-8 \pi  c_{\alpha,m}\tau_2}\prod_{n\in\bbZ}\prod_{\pm}\left(1- e^{-2\pi \tau_2 \sqrt{m^2 + \left( n \pm \alpha \right)^2}+ 2 \pi i \left(n \pm \alpha \right)\tau_1\pm 2 \pi  i \beta}\right), 
\end{multline}
with  $\alpha$,$\beta \in \R$ and 
\begin{equation*}
c_{\alpha,m} :=\frac1{(2\pi)^2} \sum_{\ell \geq 1} \cos( 2\pi \ell \alpha)\!\! \int_0^\infty e^{-\ell^2x-\frac{\pi^2m^2}{x}}dx=\frac{m}{2\pi} 
\sum_{\ell\ge 1}\cos(2\pi \ell \alpha) \frac{K_1\left(2\pi  \ell m\right)}{\ell}.
\end{equation*}
Here and throughout the paper we write $\tau:=\tau_1+i\tau_2$ with $\tau_1\in\R$ and $\tau_2\in\R^+$. The \textit{$K$-Bessel function} enters through (see 10.32.10 of \cite{Ni}), $\text{Arg}(w)< \tfrac \pi 4$
\begin{equation}\label{BesselK}
K_\nu(w):=\tfrac 12 \left(\tfrac w2 \right)^\nu \int_0^\infty \exp\left(-x- \tfrac {w^2}{4x} \right) x^{-\nu -1} \text{d}x.
\end{equation}
In the literature, $Z_{\alpha,\beta,m}(\tau)$ is often written as $Z_{\alpha,\beta,m}(\tau,\overline{\tau})$ to emphasize its non-holomorphicity.
Note that $Z_{\alpha,\beta,m}(\tau)$ only depends on $\alpha,\beta$ modulo one.

In Appendix A of  \cite{Ta}, $Z_{\alpha,\beta,m}(\tau)$ was shown to obey
\begin{equation} \label{modinvZ}Z_{\alpha,\beta,m}(\tau+1)=Z_{\alpha,\alpha+\beta,m}(\tau)\,,\ \ Z_{\alpha,\beta,m}\left(-\tfrac{1}{\tau}\right)=Z_{\beta,-\alpha,\frac{m}{|\tau|}}(\tau)\,.\end{equation}
This modular covariance is quite remarkable, given the unfamiliar square root in the exponent of \eqref{Zmuz}, but it is required by string theory.
Computing that $\lim_{m\rightarrow 0} c_{\alpha , m} =\frac{\alpha^2}{4}- \frac{\alpha}{4}+\frac{1}{24}$, we find that 
\begin{align} \label{Zlimit1} 
\lim_{m\rightarrow 0}Z_{\alpha, \beta, m}(\tau) = e^{-2\pi  \alpha^2 \tau_2 } \left|  \frac{\vartheta_1(\alpha\tau+\beta;\tau) }{ \eta(\tau)} \right|^2,
\end{align}
where for $z \in \C$ the \textit{Jacobi theta function} is defined as
\begin{equation*}
\vartheta_1(z;\tau):=-2q^{\frac18} \sin (\pi z) \prod_{n=1}^\infty(1-q^n)(1-e^{2\pi i z}q^{n})(1-e^{-2\pi i z}q^n).
\end{equation*}
 In \cite{Sugawara:2002rs}, $ Z_{\alpha,\beta,m}(\tau) $ was called a massive theta function; we, however,  do
not use this name since it is not holomorphic even in the limit \eqref{Zlimit1}.

 \subsection{Reformulation of the torus amplitudes}
 \label{E1first}
In the inversion formula in \eqref{modinvZ}, 
it is inconvenient that the  parameter $m$ is rescaled; for this reason, the deformation parameter that we use in this paper is the SL$_2(\bbZ)$-invariant quantity $\mu:=m^2|\tau|$.
In physics terms,  if $m^2$ is the mass and $|\tau|$ is the area, then $\mu$ is dimensionless.
The invariant mass parameter $\mu$ was not used in
the early literature on strings in pp-waves. It was formally introduced in \cite{DHoker:2015gmr} when writing a generating function for modular graph functions, but there was no connection to the plane wave amplitudes.

 Just as Jacobi theta functions with characteristics can be recast as Jacobi forms, \eqref{Zlimit1} suggests that we can interpret $Z_{\alpha,\beta,m}(\tau)$ as a function $Z_\mu(z;\tau)$ in $z=\alpha\tau+\beta$ for $\mu$ fixed. Then the aforementioned periodicity in $\alpha,\beta$, together with \eqref{modinvZ}, implies that for each fixed $\mu$, $Z_\mu(z;\tau)$  transforms under $\mathrm{SL}_2(\bbZ)\sddprod\bbZ^2$ like a Jacobi form of index and weight zero.
  
This raises the question what other basic property $Z_\mu(z;\tau)$ possesses. The final ingredient in our repackaging is that the logarithm of a partition function of a free theory can be a {Green's function}. The \emph{Green's functions} $G(x,y)$ of a differential operator $L$ are the solutions to an inhomogeneous (partial) differential equation
$L(x)G(x,y)=\delta(x-y)$, where $\delta(x)$ is the Dirac delta distribution; more on this is explained below equation \eqref{LaplaceG}. At least formally,
a solution
 to the differential equation $L(f)=g$ is then $f(x)=\int G(x,y)g(y)dy$. 
In quantum field theory (or string theory) the Green's function for the 
Laplace equation (or Weyl equation, or whatever the equation of motion for the system is)  plays a central role e.g.
to calculate Feynman diagram integrals. Thus we may hope that $\operatorname{Log}(Z_\mu)$ satisfies a simple differential equation (of course it continues to transform with index and weight zero). Putting all of this together, define  $$\mathcal E_{1,\mu}(z;\tau):=-{\rm Log} \left( Z_{-\alpha, \beta,\sqrt{\frac{\mu}{|\tau|}}}(\tau)\right)\,.$$
Comparing \eqref{Zlimit1} with \eqref{Estheta} then gives that\footnote{Here we are starting from the
partition function of a fermion. If we would  instead start from the partition function of a boson in equation (2.37) of  \cite{Sugawara:2002rs}, then we would arrive  at a different sign.}
 \begin{equation} \label{Zlimit2} 
\lim_{\mu\rightarrow 0^+}\mathcal E_{1,\mu}(z;\tau) =\mathbb E_1(z;\tau) \,,
\end{equation}
with the \textit{Kronecker--Eisenstein series}
\begin{equation}
\mathbb E_1(z;\tau):=
{\tau_2 \over \pi}\sum_{(r, \ell)\in\Z^2\setminus\{(0,0)\}} {e^{{2\pi i\over \tau_2}{\rm Im}((r\tau+\ell)\overline{z})} \over |r\tau+\ell|^{2}}.
\label{EsEarly}
\end{equation}
Note that $\mathbb{E}_1(z;\tau)$ is up to a factor the Green's function of the Laplace
equation on the torus   (see e.g.  \cite[Chapter 7.2]{Polchinski:1998rq})

\begin{equation} \label{LaplaceG}
\partial_{z}\partial_{\overline{z}}( {\mathbb E}_1(z;\tau)) = -2\pi \delta^{[2]}(z;\tau) +{\pi \over \tau_2}\,,
\end{equation}
where, for a variable $w$, we set $\partial_w:=\frac{\partial}{\partial w}$. 
Here, the Dirac delta distribution $\delta^{[2]}(z;\tau)$ is associated to the linear functional on the  space of smooth functions on the torus $\bbC/(\bbZ\tau+\bbZ)$ sending such a function $f(z)$ to $f(0)\in\bbC$. We formally write this linear functional as the integral operator given by 
$\int\int_P f(z)\delta^{[2]}(z;\tau) \, d^2z $ where   $P$ is a fundamental domain on $\bbC$.
Since the left-hand side integrates to zero on a compact space, the constant term
 $\pi \over \tau_2$ on the right-hand side of \eqref{LaplaceG}  is
needed to cancel the integral  $\int\!\! \int_P \delta^{[2]}(z;\tau) \, d^2 z=1$. Incidentally, if we allow for quasiperiodicity $w$
as in $\mathbb E_1(w,z;\tau)$ in  \eqref{E2}, then the constant term  $\pi \over \tau_2$ in \eqref{LaplaceG} is absent, unless 
$w\in\bbQ\tau+\bbQ$.

To summarize, in this subsection we repackage the string  torus amplitudes as
  $\mathcal E_{1,\mu}(z;\tau)$, which can be thought of as a deformed non-holomorphic
 Jacobi form of weight and index zero, and more precisely as a massive deformation of the torus Green's function of the Laplace equation. We identify in the next section what differential equation $\mathcal E_{1,\mu}(z;\tau)$
satisfies.

Using \eqref{Estheta} we rewrite \eqref{Zlimit1} as
\begin{equation*} \label{E1mulimit}
\lim_{\mu \rightarrow 0^+} \mathcal{E} _{1, \mu} (z; \tau) =\mathbb E_1(z;\tau)=  - 2  \log  \left| \frac{\vartheta_1(z;\tau)}{ \eta(\tau)} \right| +  \frac{2 \pi z_2 ^2}{\tau_2}\, ,
\end{equation*}
where $ z=z_1+iz_2 $. Since $\vartheta_1(z;\tau)$ vanishes at the lattice points $z\in\Z\tau+\Z$,  we see that $\mathbb{E}_1$ has logarithmic singularities 
at lattice points. 
These logarithmic singularities are crucial
in mathematical physics, when $\mathbb E_1(z;\tau)$ is used as a Green's function: it
is  precisely what
is required to obtain $\delta^{[2]}$ in \eqref{LaplaceG} above; readers unfamiliar with this may want to consult the
textbook Problem 2.1 in \cite{Polchinski:1998rq} with the solution in \cite{Headrick:2008ke}.

\section{Massive Jacobi forms}

\subsection{The definitions}
\label{direct}
\label{massivefunctions}

 We note that holomorphy is much more restrictive than modularity. To be more precise, it is easy to construct functions which  are non-holomorphic but modular-invariant.
For example letting $g(z)$ be any smooth function on the sphere and  $J(\tau)$ 
the SL$_2(\bbZ)$-Hauptmodul, the composition $g(J(\tau))$ is smooth, modular-invariant, and bounded at the cusps. Of course, the way to proceed is to replace the conditions like $\partial_{\overline{\tau}}=0$ by higher order differential equations invariant under the appropriate Lie group.

We first consider functions $f_\mu(\tau)$ on  $\bbH\times \bbR^+$. The group SL$_2(\bbR)$ acts on $\bbH$ as usual and fixes $\bbR^+$. The \textit{weight $k$ hyperbolic Laplace operator} for SL$_2(\bbR)$ is defined as
\begin{equation}\Delta_{\tau,k} :=-\tau^2_2\left(\partial_{\tau_1}^2+\partial_{\tau_2}^2\right)+ik\tau_2(\partial_{\tau_1}+i\partial_{\tau_2})=-4\tau_2^2\partial_\tau\partial_{\overline{\tau}}+2ik\tau_2\partial_{\overline{\tau}}.
\end{equation}
We say that a function $g$ has polynomial growth towards $i\infty$ if
\begin{equation*}
g(\tau) = O \left( \tau_2^{a} \right) \quad \text{as } \tau_2 \to \infty
\end{equation*}
for some $a>0$. Decay towards the other cusps is defined similarly. 

\begin{definition}
\label{massivestuff}
A \textit{massive Maass form } of weight $k$ is a smooth function $f_\mu(\tau)$ on $(\tau,\mu)\in
\bbH\times \bbR^+$ that 
 transforms like a  modular form of weight $k$ for some Fuchsian group, satisfies
\begin{equation}\label{1stDOf}
\Delta_{\tau,k}(f_\mu)(\tau)=\left(g_2(\mu)\partial_\mu^2+g_1(\mu)\partial_\mu+g_0(\mu)\right) f_\mu(\tau)
\end{equation}
for certain smooth functions $g_j:\bbR^+\to\bbC$,  and  for each fixed $\mu\in\bbR^+$, has
polynomial growth towards the cusps. 
 If $f_\mu$ is a massive Maass form, and $f(\tau):=\lim_{\mu \to 0^+}f_\mu(\tau)$ exists for all $\tau\in\bbH$, then we say  $f_\mu$ is a \textit{massive deformation} of $f$. 
\end{definition}

Any (holomorphic) modular form $f(\tau)$ has massive deformations, e.g. $f(\tau)+a\mu$ for $a\in\bbC$. 
In Theorem \ref{Maassex} below, we give massive Maass forms $\mathcal E_{s,\mu}(0;\tau)$ of weight zero which, for any $ s\in\mathbb C $ with  ${\rm Re} (s)>1$, are massive deformations of the Kronecker--Eisenstein series $\mathbb E_s(0;\tau)$ defined in \eqref{Es}.

 Note that adding $\lambda f_\mu$ to the right-side of \eqref{1stDOf} is unnecessary as it could be absorbed into $g_0$. 
Also, observe that we can remove the first order term $g_1(\mu)\partial_{\mu}$ in \eqref{subsec:exp}
by making the change of variable $\mu=g(\nu)$, provided that we find a solution
to  the auxiliary ordinary differential equation $g_1(g(\nu)) g'(\nu) - g_2(g(\nu)) g''(\nu)=0$.

 If $f_\mu(\tau)$ is a massive deformation of a Maass form $f(\tau)$, then for any smooth function $g(\mu)$ on $\bbR^+$ with  $\lim_{\mu\to0^+}g(\mu)=1$, $g(\mu)f_\mu(\tau)$ is  another massive deformation of $f(\tau)$, though with different $g_j(\mu)$. 
 Thus if there is one massive deformation of a given Maass form, then there are many.

Next consider  functions $\phi_\mu(z;\tau)$ on  $\bbC\times\bbH\times \bbR^+$. The Jacobi group SL$_2(\bbR)\sddprod\bbR^2$ acts on $\bbC\times\bbH$ as usual and fixes $\bbR^+$.  In particular, restricted to $A=[\left({a\atop c}{b\atop d}\right), (\lambda,\mu)]\in\mathrm{SL}_2(\bbZ)\sddprod\bbZ^2$,  the \textit{weight $k$ index $m$ slash operator} is
\begin{multline*}
\phi|_{k,m}A(z;\tau):=(c\tau+d)^{-k}\exp\left(2\pi im\left(-c\frac{(z+\lambda\tau+\mu)^2}{c\tau+d}+\lambda^2\tau+2\lambda z\right)\right)\\
\times \phi\left(\frac{z+\lambda\tau+\mu}{c\tau+d};\frac{a\tau+b}{c\tau+d}\right).
\end{multline*}
 The Laplacian here is 
\begin{equation*}\Delta_{z,k,m}:=2\tau_2\partial_z\partial_{\overline{z}}+8\pi i \alpha\tau_2m\partial_{\overline{z}}-2\pi i m.\end{equation*}
The {\it Casimir operator} of order three is given by
\begin{multline*}
C_{k,m} := 
-4\tau_2z_2\left(\partial_{z}\partial^2_{\overline{z}}+\partial^2_{z} \partial_{\overline{z}}\right)
-4\tau_2^2\left(\partial_{\overline{\tau}}\partial^2_{z}+\partial_{\tau} \partial^2_{ \overline{z}}\right)+2ik\tau_2\left(\partial_{z}\partial_{\overline{z}} +\partial^2_{\overline{z}}\right)
\\
+4\pi i m \left(8\tau_2^2\partial_{\tau} \partial^2_{\overline{\tau}}-2z_2^2 \partial^2_{\overline{z}}+8\tau_2z_2\partial_{\tau} \partial_{ \overline{z}}-2i\left(2k-1\right)\tau_2\partial_{\overline{\tau}} +2kiz_2\partial_{\overline{z}}   \right).
\end{multline*}
This operator was introduced into the context of Maass--Jacobi forms in \cite{Pitale}.
There are several definitions of those forms in the literature, one example is given in \cite{KR}.

\begin{definition}
\label{massivestuff2}
A \textit{massive Maass--Jacobi form} is a smooth function $\phi_\mu(z;\tau)$ on $(z,\tau,\mu)\in
\bbC\times \bbH\times\bbR^+$   that 
 transforms like a Jacobi form of weight $k$ and index $m$ for some discrete subgroup of $ \mathrm{SL}_2(
\bbR)\sddprod\bbR^2$, satisfies both 
\begin{align}\label{2ndDOF}C_{k,m}\left(\phi_\mu\right)(z;\tau)=&\,\lambda \,\phi_\mu(z;\tau),\\ 
\label{3rdDOF} \Delta_{z,k,m}(\phi_\mu)(z;\tau)=&\,-\left(G_2(\mu)\partial_\mu^2+G_1(\mu)\partial_\mu+G_0(\mu)\right)\phi_\mu(z;\tau)
\end{align}
for certain smooth functions $G_0,G_1,$ and $G_2\in C^\infty(\bbR^+)\,$, and has polynomial growth
for each fixed $z$ and $\mu$. (The minus sign on the right is to facilitate comparison with Section  \ref{compare}.)
If $\phi_\mu$ is a massive Maass--Jacobi form, and $\phi(z;\tau):=\lim_{\mu \to 0^+}\phi_\mu(z;\tau)$ exists for all  $z\in\bbC$ and $\tau\in\bbH$, then we say that $\phi_\mu$ is a \textit{massive deformation} of $\phi$. 
\end{definition}

Note that a massive Maass--Jacobi form is, among other things, a one-parameter family of Maass--Jacobi forms. 
For example, we prove in Corollary \ref{corKE} that ${\mathcal E}_{1,\mu}(z;\tau)$ is a massive Maass--Jacobi form of weight and index zero, for SL$_2(\bbZ)\sddprod\bbZ^2$, for which $\lambda=G_1(\mu)=G_0(\mu)=0$ is a possible choice. We generalize this example significantly in Theorem \ref{Jacobiexamples}: e.g. in Corollary \ref{corKE} we give  a massive deformation $\mathcal E_{s,\mu}(z;\tau)$ of $\mathbb E_s(z;\tau)$ for all $ s\in\mathbb C $ with  ${\rm Re} (s)>1.$

\begin{lemma} \label{Lemmaequiv}\normalfont  Choose any smooth functions 
$ g,\varphi\in C^\infty(\bbR^+)$ such that both $g$ and $\varphi'$ never vanish and $\varphi$ maps $\bbR^+$ onto $\bbR^+$.

\begin{itemize}
	
\item[(a)] Suppose that $f_\mu(\tau)$ is a massive Maass form of weight $k$, for some discrete subgroup $G$ of $\mathrm{SL}_2(\bbR)$. Then $F_\mu(\tau):=g(\mu)f_{\varphi(\mu)}(\tau)$ is also a massive Maass form of weight $k$  for $G$.

\item[(b)] Suppose that $\phi_\mu(z;\tau)$ is a massive Maass--Jacobi form of weight $k$ and index $m$, for some discrete subgroup $\Gamma$ of  $\mathrm{SL}_2(\bbR)\sddprod\bbR^2$. Then $\Phi_\mu(z;\tau):=g(\mu)\phi_{\varphi(\mu)}(z;\tau)$ is also a massive Maass--Jacobi form of weight $k$ and index $m$ for $\Gamma$.\end{itemize}\end{lemma}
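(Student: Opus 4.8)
The plan is to check, one at a time, the four conditions of Definition~\ref{massivestuff} for $F_\mu$ (and of Definition~\ref{massivestuff2} for $\Phi_\mu$). Three of them come for free: for each fixed $\mu\in\bbR^+$ the function $F_\mu$ is merely the scalar multiple $g(\mu)f_{\varphi(\mu)}$ of a massive Maass form evaluated at the parameter value $\varphi(\mu)\in\bbR^+$, so $F_\mu$ is smooth on $\bbH\times\bbR^+$ (since $g,\varphi$ are smooth and $\varphi(\bbR^+)\subseteq\bbR^+$), it transforms like a weight $k$ modular form for the \emph{same} group $G$ (a factor independent of $\tau$ is inert under every slash operator, including a multiplier system or a vector-valued transformation law), and for each fixed $\mu$ it decays like a cusp form towards every cusp. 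Hence the whole content of part (a) is to produce the new coefficient functions in \eqref{1stDOf}.

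The key point is that $\Delta_{\tau,k}$ involves only $\tau_1,\tau_2$ (equivalently $\tau,\overline\tau$), is $\bbC$-linear, and does not see $\mu$; so it commutes with $\partial_\mu$ and with multiplication by $g(\mu)$, giving $\Delta_{\tau,k}(F_\mu)(\tau)=g(\mu)\,\Delta_{\tau,k}(f_{\varphi(\mu)})(\tau)$, which by \eqref{1stDOf} at parameter value $\nu=\varphi(\mu)$ is $g(\mu)$ times $\bigl(g_2\partial_\nu^2+g_1\partial_\nu+g_0\bigr)f_\nu\big|_{\nu=\varphi(\mu)}$. I would then undo the substitution: with $h_\mu:=f_{\varphi(\mu)}$ the chain rule gives $\partial_\mu h_\mu=\varphi'(\mu)(\partial_\nu f_\nu)|_{\nu=\varphi(\mu)}$ and $\partial_\mu^2 h_\mu=\varphi'(\mu)^2(\partial_\nu^2 f_\nu)|_{\nu=\varphi(\mu)}+\varphi''(\mu)(\partial_\nu f_\nu)|_{\nu=\varphi(\mu)}$, and since $\varphi'$ never vanishes this system inverts to express $(\partial_\nu f_\nu)|_{\nu=\varphi(\mu)}$ and $(\partial_\nu^2 f_\nu)|_{\nu=\varphi(\mu)}$ as smooth combinations of $\partial_\mu h_\mu$ and $\partial_\mu^2 h_\mu$. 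A second, identical step handles the prefactor: as $g$ never vanishes, $h_\mu,\partial_\mu h_\mu,\partial_\mu^2 h_\mu$ are smooth combinations of $F_\mu=g(\mu)h_\mu$, $\partial_\mu F_\mu$, $\partial_\mu^2 F_\mu$. Assembling these substitutions rewrites $g(\mu)\Delta_{\tau,k}(h_\mu)$ as $\bigl(G_2^\ast(\mu)\partial_\mu^2+G_1^\ast(\mu)\partial_\mu+G_0^\ast(\mu)\bigr)F_\mu$, where each $G_j^\ast$ is an explicit algebraic expression in $g,g',g'',\varphi',\varphi''$ and in $g_0\circ\varphi,g_1\circ\varphi,g_2\circ\varphi$, smooth on all of $\bbR^+$ exactly because $g$ and $\varphi'$ are nowhere zero there. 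Equivalently, one may organize this as two applications of special cases already noted in the text — first the change of variable $\mu\mapsto\varphi(\mu)$ (the manipulation used above to remove the $g_1\partial_\mu$ term), then multiplication by $g(\mu)$ (the observation that $g(\mu)f_\mu$ is again massive) — and compose.

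Part (b) runs the same way. The operators $C_{k,m}$ and $\Delta_{z,k,m}$ involve only $z,\overline z,\tau,\overline\tau$, are $\bbC$-linear, and contain no $\mu$, so $C_{k,m}(\Phi_\mu)=g(\mu)\,C_{k,m}(\phi_{\varphi(\mu)})=g(\mu)\,\lambda\,\phi_{\varphi(\mu)}=\lambda\,\Phi_\mu$, i.e.\ \eqref{2ndDOF} holds with the same eigenvalue $\lambda$, and $\Delta_{z,k,m}(\Phi_\mu)=g(\mu)\,\Delta_{z,k,m}(\phi_{\varphi(\mu)})$, to which the chain-rule bookkeeping above applies verbatim with $(f_\mu,g_j)$ replaced by $(\phi_\mu,-G_j)$, yielding new smooth coefficient functions in \eqref{3rdDOF}. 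Smoothness, the Jacobi transformation law for the same group $\Gamma$ with the same weight $k$ and index $m$, and the cusp decay for fixed $z,\mu$ are inherited from $\phi_{\varphi(\mu)}$ exactly as in part (a).

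I do not expect a genuine obstacle here: the statement is a formal consequence of $\bbC$-linearity of the differential operators (in the ``other'' variables) and the chain rule in $\mu$. The only thing requiring care is verifying that the new coefficient functions are smooth and defined on all of $\bbR^+$, which is precisely where the hypotheses that $g$ and $\varphi'$ never vanish and that $\varphi$ maps $\bbR^+$ into $\bbR^+$ are used; the surjectivity of $\varphi$ is stronger than this lemma needs but causes no difficulty. The one other point worth stating explicitly is that ``transforms like a modular (resp.\ Jacobi) form'' is preserved under multiplication by a function of $\mu$ alone, since such a factor commutes with every slash operator.
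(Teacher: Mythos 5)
Your proof is correct and follows essentially the same route as the paper's: the transformation law, smoothness, and cusp decay are inherited immediately, and the differential equations \eqref{1stDOf}, \eqref{2ndDOF}, \eqref{3rdDOF} are re-established by a direct chain-rule computation, inverting the substitution $\nu=\varphi(\mu)$ and the prefactor $g(\mu)$ using the hypotheses that $\varphi'$ and $g$ never vanish. If anything you are slightly more explicit than the paper about the second inversion step (expressing $f_{\varphi(\mu)}$ and its $\mu$-derivatives back in terms of $F_\mu$ and its derivatives), which the paper's displayed formula leaves implicit.
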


\begin{proof} (a) Clearly $F_\mu$ satisfies the same transformation law with respect to $G$ as $f_\mu$, and also has the same limiting behavior towards the cusps. To see that \eqref{1stDOf} holds, with different functions $g_0, g_1$, and $g_2$, we directly compute
\begin{eqnarray*}
\Delta_{\tau, k}\left(F_\mu\right)(\tau)
=\frac{g(\mu)}{\varphi^{'}(\mu)^2}\Big(g_2(\varphi(\mu))\partial_{\mu}^2
+\left(-\varphi^{''}(\mu)g_2(\varphi(\mu))+g_1(\varphi(\mu))\varphi^{'}(\mu)\right)\Big)\partial_{\mu}\\
+g_0(\varphi(\mu))\varphi^{'}(\mu)^2. 
\end{eqnarray*}
The proof of (b) is similar. 
\end{proof}

Lemma \ref{Lemmaequiv} makes the following definitions natural.

\begin{definition} \label{def:f} For functions $g(\mu),\varphi(\mu),f_\mu(\tau),F_\mu(\tau),\phi_\mu(z;\tau)$, and $\Phi_\mu(z;\tau)$ as in Lemma \ref{Lemmaequiv}, we call $f_\mu(\tau)$ and $F_\mu(\tau)$  \textit{equivalent} as massive Maass forms, and $\phi_\mu(z;\tau)$ and $\Phi_\mu(z;\tau)$  \textit{equivalent} as massive Maass--Jacobi forms.\end{definition}

It is easy to verify that the equivalences defined above indeed yield equivalence relations.

 Note that the above objects are truly doubly periodic for $\mu=0$. It is, however,  also of interest
to allow for quasiperiodicity, which appears as a parameter $w$ in Section \ref{alternative} below. 
This can be relevant for vector-valued modular forms.

\subsection{Examples}\label{subsec:exp}

We first note the well-known fact that the Kronecker--Eisenstein series ${\mathbb E}_{1}(z;\tau)$
is a Maass--Jacobi form of weight and index zero, as we  now review.
\begin{proposition}
\label{MJE1}
For $z\notin \Z\tau+\Z$, we have
\begin{equation*}
C_{0,0} \left({\mathbb E}_{1}(z;\tau)\right)=0.
\end{equation*} 
\end{proposition}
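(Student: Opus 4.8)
The statement is that the third-order Casimir operator $C_{0,0}$ annihilates the Kronecker--Eisenstein series $\mathbb E_1(z;\tau)$ away from the lattice. My plan is a direct term-by-term computation on the Fourier-type series \eqref{EsEarly}. First I would rewrite $\mathbb E_1(z;\tau)$ so that the dependence on $z,\overline z,\tau,\overline\tau$ is fully explicit: writing $r\tau+\ell = X + iY$ with $X = r\tau_1+\ell$, $Y = r\tau_2$, and using $\mathrm{Im}((r\tau+\ell)\overline z) = Y z_1 - X z_2$, the general summand becomes
\[
\frac{\tau_2}{\pi}\,\frac{\exp\!\left(\tfrac{2\pi i}{\tau_2}(Y z_1 - X z_2)\right)}{(X^2+Y^2)}.
\]
A cleaner route is to use the lattice-sum form $\mathbb E_1(z;\tau) = \tfrac{\tau_2}{\pi}\sum_{\omega\in\Lambda\setminus\{0\}} |\omega|^{-2} e^{2\pi i \langle \omega, z\rangle/\tau_2}$ and differentiate under the summation sign (justified by the rapid decay once one term is removed, or by interpreting everything distributionally, which is harmless for $z\notin\Lambda$). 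For each fixed $\omega = r\tau+\ell$, the summand is, as a function of $z$, a plane wave $e^{a z + b\overline z}$ with $a,b$ linear in $\tau,\overline\tau$ (through $\omega,\overline\omega$), so that $\partial_z,\partial_{\overline z}$ act as multiplication by scalars, while $\partial_\tau,\partial_{\overline\tau}$ hit both the prefactor $\tau_2|\omega|^{-2}$ and the exponent. I expect each individual $\omega$-summand to be killed by $C_{0,0}$ once $k=m=0$ is substituted, so the sum vanishes term by term.

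The key steps, in order, are: (i) specialize the displayed formula for $C_{k,m}$ to $k=0$, $m=0$, which collapses it to
\[
C_{0,0} = -4\tau_2 z_2\left(\partial_z\partial_{\overline z}^2 + \partial_z^2\partial_{\overline z}\right) - 4\tau_2^2\left(\partial_{\overline\tau}\partial_z^2 + \partial_\tau\partial_{\overline z}^2\right);
\]
(ii) for a single summand $u_\omega := \tfrac{\tau_2}{\pi}|\omega|^{-2} e^{2\pi i(Yz_1 - Xz_2)/\tau_2}$, compute $\partial_z u_\omega$ and $\partial_{\overline z} u_\omega$ — these are $u_\omega$ times constants $\pi i \overline\omega/\tau_2$ and $-\pi i \omega/\tau_2$ respectively (up to a sign convention to be fixed), because $\langle\omega,z\rangle = \mathrm{Im}(\omega\overline z)$ is bilinear; (iii) compute $\partial_\tau u_\omega$ and $\partial_{\overline\tau} u_\omega$, noting $\partial_\tau \tau_2 = \tfrac1{2i}$, $\partial_\tau\omega = r$, $\partial_\tau|\omega|^2 = r\overline\omega$, and that the exponent $\tfrac{2\pi i}{\tau_2}\mathrm{Im}(\omega\overline z)$ also depends on $\tau$ through both $\tau_2$ and $\omega$; (iv) assemble the two groups of terms in $C_{0,0}(u_\omega)$ and check they cancel. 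I would organize the bookkeeping by factoring $u_\omega$ out of everything, so the claim reduces to a polynomial identity in the scalars $r,\omega,\overline\omega,\tau_2,z_2$ (and $z,\overline z$ if they survive); the weight-zero, index-zero specialization is precisely what makes this identity hold.

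\textbf{Main obstacle.} The genuine difficulty is the mixed $\tau$-and-$z$ dependence inside the exponent: the factor $\tfrac{1}{\tau_2}\mathrm{Im}(\omega\overline z)$ in the exponent means $\partial_\tau$ and $\partial_{\overline\tau}$ produce both a ``$1/\tau_2$ from differentiating $\tau_2$'' term and an ``$r\overline z$ type'' term, and these must be tracked carefully through the third-order derivatives, where cross terms proliferate. Getting the signs and the factors of $\pi$ and $i$ consistent between the $z$-derivatives and the $\tau$-derivatives — so that the two parenthesized blocks of $C_{0,0}$ exactly cancel — is where essentially all the work lies; it is routine but error-prone. An alternative that sidesteps some of this: invoke the well-known fact that $\mathbb E_1(z;\tau)$, being (up to normalization) $-2\log|\vartheta_1(z;\tau)/\eta(\tau)| + 2\pi z_2^2/\tau_2$, is a harmonic-type object and that $C_{0,0}$ factors through lower-order operators annihilating it; but since the paper wants a self-contained check, the direct term-by-term computation on \eqref{EsEarly} is the cleanest and is what I would carry out.
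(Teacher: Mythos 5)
Your computational core is exactly the paper's: split $C_{0,0}$ into the $z$-block $-4\tau_2z_2(\partial_z\partial_{\overline z}^2+\partial_z^2\partial_{\overline z})$ and the $\tau$-block $-4\tau_2^2(\partial_{\overline\tau}\partial_z^2+\partial_\tau\partial_{\overline z}^2)$, apply each to a single lattice summand, and observe that the two contributions cancel termwise (each block produces $\pm 8\pi^3 i z_2 r|r\tau+\ell|^2/\tau_2$ times the summand). That part is fine.

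The gap is in your justification for differentiating under the summation sign at $s=1$. The series \eqref{EsEarly} has terms of modulus $|r\tau+\ell|^{-2}$ times a unimodular exponential; it is \emph{not} absolutely convergent, and removing one term does not produce any decay, let alone rapid decay — your stated justification is simply false. Worse, after applying either block of $C_{0,0}$ termwise, the factor $|r\tau+\ell|^2$ appears, so each block separately yields a series whose terms are $O(1)$ in $(r,\ell)$: the two divergences cancel only formally. The ``distributional'' escape hatch you mention would need real work to set up for a third-order operator with non-constant coefficients. The paper's fix is the one you should adopt: carry out the termwise computation on $\mathbb E_s(z;\tau)$ for $\operatorname{Re}(s)$ large enough that the series remains absolutely convergent after three differentiations (so the termwise cancellation legitimately gives $C_{0,0}(\mathbb E_s)=0$ there), and then obtain the case $s=1$ by analytic continuation in $s$. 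Without that step, or an equally careful regularization at $s=1$, the argument is incomplete.
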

\begin{proof}
Write $C_{0,0}=C_{0,0;1}+C_{0,0;2}$ with  
\begin{equation}\label{defineC}
C_{0,0;1}:=-4\tau_2z_2\left(\partial_{z} \partial_{\overline{z}}^2+\partial^2_{z} \partial_{ \overline{z}}\right),\qquad C_{0,0;2}:=-4\tau_2^2\left(\partial_{\overline{\tau}}\partial^2_{z}+\partial_{\tau} \partial^2_{\overline{z}}\right).
\end{equation}
For ${\rm Re}(s)$ sufficiently large,  the double sum representation of $\mathbb E_s(z;\tau)$ 
in \eqref{Es} converges absolutely even after differentiation. We 
act on each summand, that we denote by  ${\mathbb E}_{s,r,\ell}(z;\tau)$, to obtain that
\begin{align*}
\tau_2 C_{0,0;1}({\mathbb E}_{s,r,\ell})(z;\tau)&=8\pi^3 iz_2|r \tau+\ell|^2r\, {\mathbb E}_{s,r,\ell}(z;\tau),\\
\tau_2 C_{0,0;2}({\mathbb E}_{s,r,\ell})(z;\tau)&=-8\pi^3 iz_2|r \tau+\ell|^2r\, {\mathbb E}_{s,r,\ell}(z;\tau)
\end{align*}
so the contributions from the two pieces of $C_{0,0}$ cancel termwise. The claim then follows via analytic continuation to $s=1$.
\end{proof}

To determine the action of the operator   $C_{0,0}$  on  the deformed $\mathcal E_{1,\mu}(z;\tau)$, it would
be convenient to have a similar double sum representation as for $\mathbb E_s(z;\tau)$ 
in \eqref{Es}.  
 For this purpose, recall the Bessel function defined in \eqref{BesselK}.
Note that $K_\nu(x)$ obeys the differential equation 
\begin{equation}
 x^2f''(x)+xf'(x)-\left(x^2+\nu^2\right)f(x).
 \label{BesselODE}
 \end{equation}
For $\nu\in\bbR^+$, $K_\nu$ has the asymptotic behavior $K_\nu(x)\sim \sqrt{\frac{\pi}{2x}}e^{-x}$ as $x\to\infty$, and $K_\nu(x)\sim \frac{1}{2}\Gamma(\nu)(\frac{x}{2})^{-\nu}$ as $x\to0^+$.

\begin{proposition}\label{3.3}
\label{E1muFourier}
We have
\begin{align*}
\mathcal E_{1,\mu}(z;\tau)&=
2\sqrt{\mu\tau_2} \sum_{(r,\ell)\in\mathbb Z^2\setminus\{(0,0)\}} \!\!{K_1\left(2\pi \sqrt{\frac{\mu}{\tau_2}}|r  \tau+ \ell|\right) 
 \over
|r  \tau+ \ell|} e^{ \frac{2 \pi i}{\tau_2} \text{\rm Im} (( r\tau + \ell) \overline z)}  
 \; . 
\end{align*}
In particular $\mathcal E_{1,0}(z;\tau)=\mathbb E_1(z;\tau)$. 
\end{proposition}

We defer the proof  of Proposition \ref{E1muFourier} to  Section \ref{fourierE1} and rather first consider a generalization.
Proposition \ref{E1muFourier} suggests the following generalization.
To state it, we let ${\mathcal S}$ denote the space of all functions $h\in C^\infty(\bbR^+)$, which are 
$O(e^{-ax})$ as $x\to\infty$, for some $a>0$, and $O(x^{-b})$ as $x\to 0$, for some $b>0$. 
For example, $K_\nu\in\mathcal S$ for each $\nu\in\bbR^+$. 
  Define
   \begin{multline}\label{E1reallygeneralized}
   	{\mathcal E}_{h,\mu,\left[a, b,c,d, L\right]}(z;\tau)
   	\\:=\frac{\mu^d}{\tau_2^c}\!\sum_{(r,\ell)\in\mathbb Z^2\setminus\{(0,0)\}}|r\tau+\ell|^{2c} \,h\left( \frac{\mu^a}{\tau_2^b}|r  \tau+ \ell|^{2b}\right) e^{ \frac{ 2 \pi i }{\tau_2} L\text{\rm Im} ((r \tau + \ell) \overline z)}.
\end{multline}
Then for any $h\in\mathcal S$, $a\in\bbR$, $b\in\bbR^+$, and $L\in\bbZ$,  the double-sum in ${\mathcal E}_{h,\mu, \left[a, b,c, d, L\right]}(z;\tau)$ converges absolutely to a smooth function in $\bbC\times\bbH\times\bbR^+ $. By Lemma \ref{Lemmaequiv}, ${\mathcal E}_{h,\mu, \left[a, b,c, d, L\right]}(z;\tau)$ is a massive Maass--Jacobi form if and only if ${\mathcal E}_{h,\mu, \left[1, b,c, 0, L\right]}(z;\tau)$ is.

\begin{theorem} \label{Jacobiexamples} Choose any $a\in\bbR^+$, $b\in\bbR^+$, $L\in\bbZ$, and any smooth function $h\in\mathcal S$ satisfying the differential equation
\begin{equation}
\label{otherODE}
x^2\,h''(x)+\gamma x\,h'(x)+\left(\kappa-\nu x^{\frac{1}{a}}\right)\,h(x)=0
\end{equation}
for some constants $\gamma,\kappa,\nu\in\bbR$, $\nu\ne 0$. Then
\begin{eqnarray*}\label{Ehab}{\mathcal E}_{h,\mu,[a,b, L]}(z;\tau)&:=&{\mathcal E}_{h,\mu,\left[1,a, b,0, L\right]}(z;\tau)\\
&=&
\!\!\!\!\!\!\!\!\sum_{(r,\ell)\in\mathbb Z^2\setminus\{(0,0)\}}  \!\!\frac{|r\tau+\ell|^{2b}}{\tau_2^b}\,h\left( \frac{\mu|r  \tau+ \ell|^{2a}}{\tau_2^a}\right) e^{ \frac{2 \pi i}{\tau_2} L\text{\rm Im} (( r \tau + \ell) \overline z)}
\end{eqnarray*}
is a massive Maass--Jacobi form  of weight and index zero. \end{theorem}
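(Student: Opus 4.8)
The plan is to verify the three defining properties of a massive Maass--Jacobi form of weight and index zero for the function ${\mathcal E}_{h,\mu,[a,b,L]}(z;\tau)$: the Jacobi transformation law under $\mathrm{SL}_2(\bbZ)\sddprod\bbZ^2$, the cuspidal decay, and the two differential equations \eqref{2ndDOF} and \eqref{3rdDOF}. By the remark preceding the theorem, it suffices to treat the case ${\mathcal E}_{h,\mu,[1,a,b,0,L]}$, since Lemma~\ref{Lemmaequiv}(b) reduces the general $[a,b,c,d,L]$ case to a $[1,b,c,0,L]$ case; here the exponent roles are exactly set up so that the $a$ in the statement plays the role of the second slot and the first slot is $1$. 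So I fix the series in \eqref{Ehab} and work with it directly.

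First I would dispatch the transformation law. Each summand depends on $(r,\ell)$ only through the two $\mathrm{SL}_2(\bbZ)\sddprod\bbZ^2$-invariant combinations $|r\tau+\ell|^2/\tau_2$ and $\tfrac{1}{\tau_2}\operatorname{Im}((r\tau+\ell)\overline z)$ (the latter up to an integer shift, which is absorbed by $L\in\bbZ$ into the exponential together with the standard index-zero cocycle; one checks that translating $z\mapsto z+\lambda\tau+\nu$ and acting by $\left({a\atop c}{b\atop d}\right)$ permutes the lattice vectors $(r,\ell)$ and leaves each term invariant). Absolute convergence of the (differentiated) series for $h\in{\mathcal S}$ and the rapid decay $h(x)=O(e^{-ax})$ give both smoothness in $\bbC\times\bbH\times\bbR^+$ and the cusp decay: as $\tau_2\to\infty$ the $(r,\ell)=(0,\ell)$ terms have argument $\mu\ell^{2a}/\tau_2^a\to 0$, but those are precisely the terms the lattice-sum structure forces to pair up, and a short estimate shows the whole sum is $O(e^{-a'\tau_2})$; the other cusps follow by modularity.

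The main work, and the main obstacle, is the two differential equations. For \eqref{2ndDOF} with $\lambda=0$: I would mimic Proposition~\ref{MJE1}, splitting $C_{0,0}=C_{0,0;1}+C_{0,0;2}$ as in \eqref{defineC} and showing the two pieces cancel termwise. Since $C_{0,0;1}$ only differentiates in $z,\overline z$ and $C_{0,0;2}$ hits $\tau,\overline\tau$ as well, applying them to a single summand $\frac{|r\tau+\ell|^{2b}}{\tau_2^b}h\!\left(\frac{\mu|r\tau+\ell|^{2a}}{\tau_2^a}\right)e^{\frac{2\pi i}{\tau_2}L\operatorname{Im}((r\tau+\ell)\overline z)}$ produces, via the chain rule, terms in $h$, $h'$, $h''$ with polynomial coefficients in $r,\ell,\tau_2,z_2$; the claim is that the $h'$ and $h''$ contributions cancel between the two pieces by the same mechanism as in Proposition~\ref{MJE1} (they are third-order operators and the ``radial'' dependence on $|r\tau+\ell|^2$ is common), while the $h$ contributions cancel exactly as before. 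The ODE \eqref{otherODE} is \emph{not} needed for \eqref{2ndDOF} --- only for \eqref{3rdDOF}. For \eqref{3rdDOF} at index $m=0$, $\Delta_{z,0,0}=2\tau_2\partial_z\partial_{\overline z}$, and applying $\partial_z\partial_{\overline z}$ to a summand yields, writing $u:=\mu|r\tau+\ell|^{2a}/\tau_2^a$, an expression of the shape (coefficient)$\cdot\bigl(u^2 h''(u)+(\text{const})\,u\,h'(u)+(\text{const})\,h(u)\bigr)$ plus lower-order-in-$h$ pieces, because $\partial_z$ brings down a factor linear in $\overline{r\tau+\ell}$ from the exponential and $\partial_{\overline z}$ a factor linear in $r\tau+\ell$, whose product rebuilds $|r\tau+\ell|^2$. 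The substitution that turns this into an operator in $\mu$ is $\mu\partial_\mu u=u$, so $\mu^2\partial_\mu^2$ and $\mu\partial_\mu$ acting on $h(u)$ reproduce $u^2h''(u)+u h'(u)$ and $u h'(u)$. Matching \eqref{otherODE}, with its anomalous term $\nu x^{1/a}$ --- note $u^{1/a}=\mu^{1/a}|r\tau+\ell|^2/\tau_2$, which is exactly (a constant times) the invariant $|r\tau+\ell|^2/\tau_2$ that also appears when $\partial_z\partial_{\overline z}$ hits the exponential --- is precisely what forces the right-hand side of \eqref{3rdDOF} to collapse to $-(G_2(\mu)\partial_\mu^2+G_1(\mu)\partial_\mu+G_0(\mu))$ acting on the \emph{whole} series rather than on individual terms. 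Explicitly I expect $G_2(\mu)=c_2\mu$, $G_1(\mu)=c_1$, $G_0(\mu)=c_0\mu^{1/a-1}$ for constants $c_i$ built from $\gamma,\kappa,\nu,L,a$; the delicate point is bookkeeping the powers of $\tau_2$ and $|r\tau+\ell|^2$ so that every non-$\mu$-reducible factor is one of the two invariants, which is what makes the $G_j$ depend on $\mu$ alone. I would carry out this power-matching carefully --- that is the real content --- and then invoke Lemma~\ref{Lemmaequiv}(b) to pass back to general $[a,b,L]$, completing the proof.
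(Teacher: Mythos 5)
Your overall strategy is the same as the paper's: verify the transformation law and cusp decay, check the Casimir equation \eqref{2ndDOF} termwise via the split $C_{0,0}=C_{0,0;1}+C_{0,0;2}$ of \eqref{defineC}, and check \eqref{3rdDOF} termwise using the ODE \eqref{otherODE}; you also correctly observe that \eqref{otherODE} is needed only for \eqref{3rdDOF}. However, the decisive computation is both misdescribed and left undone. A summand of \eqref{Ehab} factors as $f_{\mu,r,\ell}(\tau)\,E_{r,\ell,L}(z;\tau)$ with \emph{all} of the $z$-dependence in the exponential, so $\Delta_{z,0,0}=2\tau_2\partial_z\partial_{\overline z}$ produces no $h'$ or $h''$ terms whatsoever: it simply multiplies the summand by $-2\pi^2L^2|r\tau+\ell|^2/\tau_2=-2\pi^2L^2(X_\mu(\tau)/\mu)^{1/a}$, where $X_\mu(\tau):=\mu|r\tau+\ell|^{2a}/\tau_2^a$. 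Your claim that $\partial_z\partial_{\overline z}$ yields an expression of the shape $u^2h''(u)+cuh'(u)+ch(u)$ is therefore wrong (and inconsistent with your own justification in the same sentence); the $h''$ and $h'$ terms come exclusively from the $\mu$-derivatives on the right of \eqref{3rdDOF}, via $\partial_\mu f_{\mu,r,\ell}=\mu^{-1-b/a}X_\mu^{1+b/a}h'(X_\mu)$ and $\partial_\mu^2 f_{\mu,r,\ell}=\mu^{-2-b/a}X_\mu^{2+b/a}h''(X_\mu)$. Matching the residual powers of $\mu$ forces $G_2(\mu)=L^2\tfrac{2\pi^2}{\nu}\mu^{2-1/a}$, $G_1(\mu)=L^2\tfrac{2\pi^2\gamma}{\nu}\mu^{1-1/a}$, $G_0(\mu)=L^2\tfrac{2\pi^2\kappa}{\nu}\mu^{-1/a}$, not your guessed $c_2\mu$, $c_1$, $c_0\mu^{1/a-1}$. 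Since you explicitly defer this power-matching as ``the real content,'' the proof is incomplete exactly where the theorem lives.

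Two further inaccuracies. For \eqref{2ndDOF}, the mechanism is not that ``the $h'$ and $h''$ contributions cancel between the two pieces'': $C_{0,0;1}$ never differentiates $f_{\mu,r,\ell}$ at all, $C_{0,0;2}$ carries only one $\tau$-derivative so produces no $h''$, and its $h'$ contributions vanish on their own because $\left((r\tau+\ell)^2\partial_\tau+(r\overline\tau+\ell)^2\partial_{\overline\tau}\right)F\!\left(|r\tau+\ell|^2/\tau_2\right)=0$ for any smooth $F$; only the purely multiplicative pieces cancel between $C_{0,0;1}$ and $C_{0,0;2}$. Finally, your cusp-decay argument that the $(0,\ell)$ terms are ``forced to pair up'' is not an argument; the intended mechanism is the termwise exponential decay coming from $h\in\mathcal S$ (for $r\neq 0$ one has $X_\mu\geq\mu\tau_2^a\to\infty$), together with $\mathrm{SL}_2(\bbZ)$-invariance to reduce to the cusp $i\infty$.
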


\begin{proof} First note that any ${\mathcal E}_{h,\mu,[a,b, c, d, L]}(z;\tau)$ transforms like a Jacobi form of weight and of index zero. To verify the cusp condition in Definition \ref{massivestuff2}, it suffices to consider $\tau\to i\infty$ thanks to invariance under SL$_2(\bbZ)$, and each term in \eqref{Ehab} exponentially decays to 0  because $h\in\mathcal S$,
except the $r=0$ terms, that have at most polynomial growth.

Because $h$ decays rapidly as $x\to\infty$, we can verify the partial differential equation \eqref{3rdDOF} term-by-term. Dropping the dependence on $a,b,L,$ and $h$ from the notation, we define 
\begin{equation*}f_{\mu,r,\ell}(\tau):=
\frac{|r\tau+\ell|^{2b}}{\tau_2^b}\,h\left( \frac{\mu|r  \tau+ \ell|^{2a}}{\tau_2^a}\right) 
\end{equation*}
and keep $E_{r,\ell,L}(z;\tau):=e^{ \frac{ 2 \pi i}{\tau_2} L \text{\rm Im} (( r\tau + \ell) \overline z)}$ as a separate factor. 
We obtain,  with $C_{0,0;1}$ and $C_{0,0;2}$ as defined in \eqref{defineC}
\begin{align*}
C_{0,0;1}(f_{\mu,r,\ell}E_{r,\ell,L})(z;\tau)=&\,8L^3\pi^3i\alpha r|r\tau+\ell|^2f_{\mu,r,\ell}(\tau)E_{r,\ell,L}(z;\tau)\,,\\
	C_{0,0;2}(f_{\mu,r,\ell}E_{r,\ell,L})(z;\tau)=&-4L^2\pi^2 \left((r\tau+\ell)^2\partial_{\tau}(f_{\mu,r,\ell})(\tau)\right.\\
		 & \left.+(r\overline{\tau}+\ell)^2\partial_{\overline{\tau}}\left(f_{\mu,r,\ell}(\tau)\right)\right)E_{r,\ell,L}(z;\tau)\\
		  & -8\pi^3r\alpha i L^3|r\tau+\ell|^2f_{\mu,r,\ell}(\tau)E_{r,\ell,L}(z;\tau)
		 		    \; .
\end{align*}
Since $((r\tau+\ell)^2\partial_{\tau}
		  +(r\overline{\tau}+\ell)^2\partial_{\overline{\tau}})F(\frac{|r\tau+\ell|^2}{\tau_2})$ vanishes for any smooth function $F$,  we have that $C_{0,0;2}(f_{\mu,r,\ell}E_{r,\ell,L})=-C_{0,0;1}(f_{\mu,r,\ell}E_{r,\ell,L})$. Hence \eqref{2ndDOF} holds.
		  
Turning to \eqref{3rdDOF},
 we obtain that
$$\Delta_{z,0,0}\left(f_{\mu,r,\ell}E_{r,\ell,L}\right)(z;\tau)=-2\pi^2L^2\mu^{-\tfrac{b+1}{a}}X_{\mu}(\tau)^{\tfrac{b+1}{a}}h(X_{\mu}(\tau))E_{r,\ell,L}(z;\tau),$$ where 
$X_\mu (\tau):=\frac{\mu|r\tau+\ell|^{2a}}{\tau_2^a}$.
 We also compute, for any functions $G_j(\mu)$ as above, 
\begin{multline*}
\left(G_2(\mu)\partial_\mu^2+G_1(\mu)\partial_\mu+G_0(\mu)\right)f_{\mu,r,\ell}(\tau)\\ =
G_2(\mu)\mu^{-2-\tfrac{b}{a}}X_{\mu}(\tau)^{\tfrac{b}{a}+2}h''(X_\mu(\tau))+G_1(\mu)\mu^{-1-\tfrac{b}{a}}X_\mu(\tau)^{\tfrac{b}{a}+1}h'(X_\mu(\tau))+\\
G_0(\mu)\mu^{-\tfrac{b}{a}}X_\mu(\tau)^{\tfrac{b}{a}}h(X_\mu(\tau)).
\end{multline*}
Choosing $G_0(\mu):=L^2\tfrac{2\pi^2\kappa}{\nu}\mu^{-\tfrac{1}{a}}$, $G_1(\mu):=L^2\tfrac{2\pi^2\gamma}{\nu}\mu^{1-\tfrac{1}{a}}$, $G_2(\mu):=L^2\tfrac{2\pi^2}{\nu}\mu^{2-\tfrac{1}{a}}$ and using \eqref{otherODE}, we obtain that \eqref{3rdDOF} holds termwise.  
\end{proof}
\begin{remark}
We see from the proof of Theorem \ref{Jacobiexamples} that the parameters $\gamma,\kappa,\nu\in\bbR$, $\nu\ne 0$
in \eqref{otherODE} can be used to  ``tune'' the ordinary differential equation \eqref{otherODE}. This means that if applications
demand a certain 
partial differential operator in $\mu$ in  \eqref{3rdDOF}, i.e., certain functions $G_0(\mu)$, $G_1(\mu)$, and $G_2(\mu)$, the ordinary differential equation \eqref{otherODE} adjusts accordingly. An explicit example is given in Section  \ref{modgraph}. 
By the change of variables $x\mapsto({w \over 2b\sqrt{\nu}})^{2b}$
and $h \mapsto x^{\frac{1-\gamma}{2}}H$, the differential equation \eqref{otherODE} for $h(x)$
transforms to the Bessel equation \eqref{BesselODE} for $H$ with $\nu^2=b^2((\gamma-1)^2-4\kappa).$ However, this
 change of variables may not be admissible in certain applications,
for example it  generically changes the massless limit,
so we treat  \eqref{otherODE} as a generalization of \eqref{BesselODE}. If one elects to extend Definition \ref{def:f} to allow further powers of $\partial_\mu$ on the right-side of  
\eqref{3rdDOF}, then Theorem \ref{Jacobiexamples} is extended to include  families 
for which
the corresponding \eqref{otherODE}  is  not a Bessel-type equation. For example, if $g_3(\mu)\partial_\mu^3$ is included in  \eqref{3rdDOF}, then the proof of Theorem \ref{Jacobiexamples} yields massive Maass--Jacobi forms for certain functions $h(x)$ satisfying $$x^3h'''(x)+\varrho x^2\,h''(x)+\gamma x\,h'(x)+\left(\kappa-\nu x^{\frac 1a}\right)\,h(x)=0$$ for $\varrho,\gamma,\kappa,\nu\in\bbR$.

\end{remark}

From Theorem \ref{Jacobiexamples}, it is natural to generalize Proposition \ref{E1muFourier} as in the following corollary. For this, we set
\begin{equation*}{\mathcal E}_{s,\mu}(z;\tau):= 
	2\sum_{(r,\ell)\in\mathbb Z^2\setminus\{(0,0)\}}  \!\!\left(\frac{\sqrt{\mu\tau_2}}{|r\tau+\ell|}\right)^s\,K_s\left(2\pi \sqrt{\frac{\mu}{\tau_2}}|r  \tau+ \ell|\right) e^{2\pi i(r\beta-\ell\alpha)}.
\end{equation*}
\begin{corollary} \label{corKE} For any $s\in\C$ with
		$\operatorname{Re}(s)> 0 $,  the function
${\mathcal E}_{s,\mu}(z;\tau) $
is a massive Maass--Jacobi form for $\mathrm{SL}_2(\bbZ)\sddprod\bbZ^2$, of weight and index zero. For $\mu\to 0^+$ we have
${\mathcal E}_{s,\mu}(z;\tau) \to \mathbb E_{s}(z;\tau)$.
\end{corollary}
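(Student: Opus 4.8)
The plan is to recognize $\mathcal E_{s,\mu}(z;\tau)$ as a special case of the family in Theorem \ref{Jacobiexamples}, with $h$ a Bessel function, and then read the conclusion off from that theorem. First I would pass to the Jacobi variable: writing $z=\alpha\tau+\beta$ with $\alpha,\beta\in\bbR$ one has $\operatorname{Im}((r\tau+\ell)\overline z)=\tau_2(r\beta-\ell\alpha)$, so that $e^{2\pi i(r\beta-\ell\alpha)}=e^{\frac{2\pi i}{\tau_2}\operatorname{Im}((r\tau+\ell)\overline z)}$; hence the exponential factor in $\mathcal E_{s,\mu}$ is exactly the one in \eqref{E1reallygeneralized} with $L=1$. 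Next, put $h(x):=2x^{s}K_{s}(2\pi x)$ and track the powers of $\mu,\tau_2,|r\tau+\ell|$ (using $w^{s}=(\mu/\tau_2)^{s/2}|r\tau+\ell|^{s}$ for $w:=\sqrt{\mu/\tau_2}\,|r\tau+\ell|$); a one-line computation then gives
\[
\mathcal E_{s,\mu}(z;\tau)=\mathcal E_{h,\mu,[1/2,1/2,-s,0,1]}(z;\tau).
\]
By the equivalence noted immediately after \eqref{E1reallygeneralized} (an instance of Lemma \ref{Lemmaequiv}), it then suffices to show that $\mathcal E_{h,\mu,[1,1/2,-s,0,1]}(z;\tau)=\mathcal E_{h,\mu,[1/2,-s,1]}(z;\tau)$ is a massive Maass-Jacobi form of weight and index zero for $\mathrm{SL}_2(\bbZ)\sddprod\bbZ^2$.

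The second step is to verify the hypotheses on $h$ in Theorem \ref{Jacobiexamples}. From $K_{s}(x)\sim\sqrt{\pi/(2x)}\,e^{-x}$ one gets $h(x)=O(x^{\operatorname{Re}(s)-1/2}e^{-2\pi x})$, so $h\in\mathcal S$ for every $s\in\bbC$ (this also shows the double sum defining $\mathcal E_{s,\mu}$ converges absolutely and smoothly for all $s$, so the hypothesis $\operatorname{Re}(s)>0$ will only be needed for the limit below). For the differential equation \eqref{otherODE}, substitute $h=2x^{s}K_s(2\pi x)$ into $x^{2}h''+\gamma x h'+(\kappa-\nu x^{1/a})h$ and remove the second derivative using \eqref{BesselODE}, which for $g(x):=K_s(2\pi x)$ reads $x^{2}g''+xg'-(4\pi^{2}x^{2}+s^{2})g=0$; the coefficients of $x^{s}g$, $x^{s+1}g'$ and $x^{s+2}g$ then cancel identically exactly when $a=\tfrac12$, $\gamma=1-2s$, $\kappa=0$, $\nu=4\pi^{2}$. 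Here $\nu\neq0$, as required; for non-real $s$ the constant $\gamma$ is not real, but the proof of Theorem \ref{Jacobiexamples} uses only the ordinary differential equation and produces $G_0,G_1,G_2\in C^{\infty}(\bbR^+)$ that are allowed to be complex-valued, so it applies verbatim. Since also $a=\tfrac12\in\bbR^+$ and $L=1\in\bbZ$, Theorem \ref{Jacobiexamples} gives the Jacobi transformation law, the cusp decay, and \eqref{2ndDOF}-\eqref{3rdDOF} (with $\lambda=0$ a possible choice).

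For the massless limit I would argue term by term. Each summand of $\mathcal E_{s,\mu}$ equals $2\tau_2^{s}|r\tau+\ell|^{-2s}\,w^{s}K_s(2\pi w)\,e^{2\pi i(r\beta-\ell\alpha)}$ with $w=\sqrt{\mu/\tau_2}\,|r\tau+\ell|$; by the stated small-argument behaviour of $K_s$ the function $x\mapsto x^{s}K_s(2\pi x)$ extends continuously to $[0,\infty)$ with value $\tfrac12\Gamma(s)\pi^{-s}$ at $0$ (this is where $\operatorname{Re}(s)>0$ enters) and decays exponentially at infinity, hence has bounded modulus, so the summands are dominated, uniformly in $\mu>0$, by $C\,\tau_2^{\operatorname{Re}(s)}|r\tau+\ell|^{-2\operatorname{Re}(s)}$, which is summable for $\operatorname{Re}(s)>1$. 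Dominated convergence then yields $\lim_{\mu\to0^{+}}\mathcal E_{s,\mu}(z;\tau)=\Gamma(s)\pi^{-s}\tau_2^{s}\sum_{(r,\ell)\neq(0,0)}|r\tau+\ell|^{-2s}e^{\frac{2\pi i}{\tau_2}\operatorname{Im}((r\tau+\ell)\overline z)}=\mathbb E_{s}(z;\tau)$; for $0<\operatorname{Re}(s)\le1$ this equality is to be understood through the analytic continuation in $s$ used to define $\mathbb E_s$ in \eqref{Es}.

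I do not expect a deep obstacle: the corollary is essentially a dictionary exercise built on Theorem \ref{Jacobiexamples}. The two points that need care are getting the parameter correspondence exactly right — it is easy to be off by a power of $\mu$, since the Bessel argument of $\mathcal E_{s,\mu}$ carries $\sqrt\mu$ whereas \eqref{Ehab} carries $\mu$, which is why the identification lands on the five-parameter symbol with first index $\tfrac12$ (equivalently, one could first apply Lemma \ref{Lemmaequiv} with $\varphi(\mu)=\sqrt\mu$) — and justifying the interchange of limit and summation in the $\mu\to0^{+}$ computation, bearing in mind that the limiting Kronecker-Eisenstein series converges only for $\operatorname{Re}(s)>1$.
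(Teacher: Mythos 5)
Your proposal is correct and follows essentially the same route as the paper: recognize $\mathcal E_{s,\mu}$ as an instance of the family \eqref{E1reallygeneralized}, reduce via Lemma \ref{Lemmaequiv}, check the Bessel-type ODE \eqref{otherODE}, and invoke Theorem \ref{Jacobiexamples}. The only difference is cosmetic — the paper takes $h(x)=2K_s(2\pi x)$ with parameters $[\tfrac12,\tfrac12,-\tfrac s2,\tfrac s2,1]$ (so \eqref{otherODE} is literally the Bessel equation), whereas you absorb $x^s$ into $h$ and land on $\gamma=1-2s$, $\kappa=0$; these are related by exactly the substitution $h\mapsto x^{(1-\gamma)/2}H$ noted in the remark after Theorem \ref{Jacobiexamples} — and your treatment of the $\mu\to0^+$ limit is in fact more careful than the paper's.
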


\begin{proof} We find that ${\mathcal E}_{s,\mu}(z;\tau)=2\,{\mathcal E}_{2K_s(2\pi x),\mu, [\frac12,\frac12,\frac{-s}{2},\frac{s}{2},1]}(z;\tau)$, and therefore, by Theorem \ref{Jacobiexamples},  ${\mathcal E}_{s,\mu}(z;\tau)$ is a massive Maass--Jacobi form for $\mathrm{SL}_2(\bbZ)\sddprod\bbZ^2$, of weight and index zero. 
Taking the limit as $\mu\to 0^+$ and comparing with \eqref{Es} below, we obtain $\mathbb E_{s}(z;\tau)$ as desired.
\end{proof}

It is interesting that we get massive deformations for $\mathbb E_s$, even though $\Delta_{z,0,0}({\mathbb E}_s)$ is proportional to $\mathbb E_{s-1}$, and not to $\mathbb E_s$ (see \eqref{DeltaEs}). What makes this possible are the $G_j(\mu)$ in \eqref{3rdDOF}.

\begin{theorem}\label{Maassex} For any $h\in{\mathcal S}$, and any $a,c,d\in\bbR$ and $b\in\bbR^+$, with $a\ne 0$, the function 
${\mathcal E}_{h,\mu[{a, b,c, d, 0}]}(0;\tau)$
 is a massive Maass form of weight zero for {\rm SL}$_2(\bbZ)$. 
 \end{theorem}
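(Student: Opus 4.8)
The plan is to reduce the statement to Theorem~\ref{Jacobiexamples} by setting $z=0$ and checking that nothing essential is lost. First I would observe that ${\mathcal E}_{h,\mu,[a,b,c,d,0]}(0;\tau)$ is exactly the $z$-independent specialization of the double sum \eqref{E1reallygeneralized}, since putting $L=0$ kills the exponential factor $E_{r,\ell,L}$. Thus ${\mathcal E}_{h,\mu,[a,b,c,d,0]}(0;\tau)=\frac{\mu^d}{\tau_2^c}\sum_{(r,\ell)\neq(0,0)}|r\tau+\ell|^{2c}h(\mu^a\tau_2^{-b}|r\tau+\ell|^{2b})$, which is manifestly a function on $\bbH\times\bbR^+$ alone. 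Modularity of weight zero for $\mathrm{SL}_2(\bbZ)$ follows just as in the first paragraph of the proof of Theorem~\ref{Jacobiexamples}: the lattice $\bbZ\tau+\bbZ$ and the ratio $|r\tau+\ell|^2/\tau_2$ are permuted/invariant under $\mathrm{SL}_2(\bbZ)$. The cusp decay towards $i\infty$ (and hence, by $\mathrm{SL}_2(\bbZ)$-invariance, towards all cusps) is again termwise, since $h\in{\mathcal S}$ forces exponential decay of each summand as $\tau_2\to\infty$.

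The substantive point is the differential equation \eqref{1stDOf}, and here I would exploit the compatibility of the two Laplacians noted in the introduction: $\Delta_{\tau,k}=-4\tau_2^2\partial_\tau\partial_{\overline\tau}+2ik\tau_2\partial_{\overline\tau}$, and at weight $k=0$ this is just $-4\tau_2^2\partial_\tau\partial_{\overline\tau}$. By Lemma~\ref{Lemmaequiv}(a) (with $g\equiv 1$ and a suitable $\varphi$, exactly as in the last paragraph before \S\ref{subsec:exp} and in the displayed reduction in the proof of Theorem~\ref{Jacobiexamples}) it suffices to treat the normalized case $a=1$, $d=0$. I expect a direct computation, differentiating the single summand $f_{\mu,r,\ell}(\tau)=\tau_2^{-b}|r\tau+\ell|^{2b}h(\mu\tau_2^{-a}|r\tau+\ell|^{2a})$ (in the normalized notation $X_\mu(\tau)=\mu\tau_2^{-a}|r\tau+\ell|^{2a}$ of the earlier proof), to show that $\Delta_{\tau,0}(f_{\mu,r,\ell})$ is again a combination of $X_\mu^{j}h^{(j)}(X_\mu)$ for $j=0,1,2$ with coefficients that are powers of $\mu$ times constants. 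The key algebraic identity is the same one used in Theorem~\ref{Jacobiexamples}: the operator $\tau_2^2\partial_\tau\partial_{\overline\tau}$ acts on any smooth function of $|r\tau+\ell|^2/\tau_2$ in a controlled way because $\partial_\tau(|r\tau+\ell|^2/\tau_2)$ and $\partial_{\overline\tau}(|r\tau+\ell|^2/\tau_2)$ are explicit, and $((r\tau+\ell)^2\partial_\tau+(r\overline\tau+\ell)^2\partial_{\overline\tau})$ annihilates such functions. Feeding the resulting second-order ODE for $h$ into the ODE \eqref{otherODE} (equivalently, just reading off $g_0,g_1,g_2$ directly, with no need to invoke \eqref{otherODE} at all since here we are not constrained by a prescribed right-hand side) produces smooth functions $g_j:\bbR^+\to\bbC$ for which \eqref{1stDOf} holds termwise, and by absolute convergence after differentiation it holds for the full sum.

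The main obstacle — really the only place care is needed — is verifying that $\Delta_{\tau,0}(f_{\mu,r,\ell})$ stays within the three-dimensional span $\{X_\mu^{j}h^{(j)}(X_\mu):j=0,1,2\}$ with $\mu$-dependent but $(r,\ell)$-independent and $\tau$-independent scalar coefficients; a priori the chain rule could generate terms like $h'(X_\mu)$ multiplied by something not proportional to $X_\mu$. I expect this to work out because of the homogeneity structure: $X_\mu$ is built from the single $\mathrm{SL}_2(\bbR)$-scalar $|r\tau+\ell|^2/\tau_2$, and $\Delta_{\tau,0}$ applied to $g(|r\tau+\ell|^2/\tau_2)$ returns $(|r\tau+\ell|^2/\tau_2)^2 g''+c_1(|r\tau+\ell|^2/\tau_2)g'$ for a universal constant $c_1$, which after reintroducing the prefactor $\tau_2^{-b}|r\tau+\ell|^{2b}$ and rewriting in terms of $X_\mu$ gives precisely the desired form; one then absorbs all leftover powers of $\mu$ into $g_0,g_1,g_2$. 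Since $h\in{\mathcal S}$ guarantees that these (and their $\mu$-derivatives) decay rapidly and are smooth on $\bbR^+$, Definition~\ref{massivestuff} is satisfied, completing the proof.
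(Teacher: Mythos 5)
Your proposal is correct and follows essentially the same route as the paper: modularity and cusp decay are inherited from Theorem \ref{Jacobiexamples}, Lemma \ref{Lemmaequiv} reduces to $a=1$, $d=0$, and a termwise computation of $\Delta_{\tau,0}$ on $f_{\mu,r,\ell}$ lands in the span of $X_\mu^j h^{(j)}(X_\mu)$ with coefficients that are constants times powers of $\mu$, yielding $g_2(\mu)=-b^2\mu^2$, $g_1(\mu)=-(b^2+2bc+b)\mu$, $g_0(\mu)=-c^2-c$ with no condition on $h$ --- exactly as you anticipated. The only blemish is notational: in the normalized summand the prefactor exponent is $c$ and the $h$-argument exponent is $b$ (you borrowed the $[a,b]$ labels from the Jacobi case), but this does not affect the argument.
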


    \begin{proof}  Theorem \ref{Jacobiexamples} yields that each ${\mathcal E}_{h,\mu,\left[{a, b,c, d, L}\right]}(0;\tau)$ transforms like a weight zero modular form for SL$_2(\bbZ)$ and satisfies the cusp condition, so all that remains is to verify \eqref{1stDOf}. By Lemma \ref{Lemmaequiv}, it suffices to take $a=1$ and $d=0$. 
    Writing $f_{\mu,r,\ell}(\tau):= \frac{|r\tau+\ell|^{2c}}{\tau_2^c}h(\frac{\mu|r\tau+\ell|^{2b}}{\tau_2^b})$, we compute 
    
\begin{multline*}
\Delta_{\tau,0}\left(f_{\mu,r,\ell}\right)(\tau)=
-b^2\mu^2\frac{|r\tau+\ell|^{4b+2c}}{\tau_2^{2b+c}}h''(X_{\mu}(\tau))\\
-\left(b^2+2bc+b\right)\mu\frac{|r\tau+\ell|^{2b+2c}}{\tau_2^{b+c}}h'(X_{\mu}(\tau))
-\left(c^2+c\right)\frac{|r\tau+\ell|^{2c}}{\tau_2^c}h(X_{\mu}(\tau))
\end{multline*}
with $X_{\mu}(\tau):=\frac{\mu}{\tau_2^b} |r\tau+\ell|^{2b}$.
Note that this can also be written as 
\begin{multline*}
\Delta_{\tau,0}\left(f_{\mu,r,\ell}\right)(\tau)=-\mu^{-{c \over b}}X_{\mu}(\tau)^{c \over b}
\left(b^2 X_{\mu}(\tau)^2 h''\left(X_{\mu}(\tau)\right)\right.\\
\left.+b(b+2c+1)X_{\mu}(\tau)h'\left(X_{\mu}(\tau)\right)+c(c+1)h\left(X_{\mu}(\tau)\right)\right).
\end{multline*}
Similarly to the proof of Theorem \ref{Jacobiexamples}, we use this
explicit expression for $\Delta_{\tau,0}\left(f_{\mu,r,\ell}\right)(\tau)$ to obtain that \eqref{1stDOf} is satsified with
$g_2(\mu):=-b^2\mu^2$, $g_1(\mu):=-\left(b^2+2bc+b\right)\mu,$ and $g_0(\mu):=-c^2-c$;
note that there is no condition on $h$ in this case. 
\end{proof}

 The  examples of massive Maass forms in Theorem \ref{Maassex} are often deformations of Maass forms. For example, if  $h(x)=2K_s(2\pi x)$ for $s\in\C$ with $\operatorname{Re}(s)>1$, then
  $\mathcal E_s(0;\tau)={\mathcal E}_{h,\mu,[\frac12,\frac12,\frac{-s}{2}, \frac{s}{2},0]}(0;\tau)$ is a massive deformation of $\mathbb{E}_s(0;\tau)$, as in Corollary \ref{corKE}.

\section{Fourier coefficients of $\mathcal E_{1,\mu}(z;\tau)$ and the proof of Proposition \ref{E1muFourier}}
\label{fourierE1}

\subsection{Proof of Proposition \ref{E1muFourier}}
We are now ready to compute the Fourier expansion of $\mathcal E_{1,\mu}(z;\tau)$. Note that in the massless case $\mu=0$, calculating the Fourier expansion of $\mathcal E_{1,\mu}(z;\tau)$ 
is equivalent  to proving Kronecker's second limit formula, as expressed in Appendix A; in physics, this is the Fourier expansion of the  Green's function of the Laplace equation on the torus, see Problem 7.3 in \cite{Polchinski:1998rq}.
\begin{proof}[\textnormal{\bf Proof of Proposition \ref{E1muFourier}}]
Taking the logarithm of equation \eqref{Zmuz} and recalling that
$ \mathcal E_{1,\mu}(z;\tau)$ is defined with $-\alpha$ gives
\begin{equation*}
\mathcal E_{1,\mu}(z;\tau) =8 \pi c_{\alpha,\mu}\tau_2 +
 \sum_{n\in\Z} \sum_{\pm} \Log \left(1- e^{-2\pi \tau_2 \sqrt{\frac{\mu}{\tau_2} + \left( n \pm \alpha\right)^2}+ 2 \pi i \left(n \pm \alpha\right)\tau_1\mp 2 \pi i \beta}\right) \; . 
\end{equation*}

Using Poisson summation, the $(\ell,r)$-th ($\ell,r\in\Z$) Fourier coefficient of the second term equals
\begin{align}\label{Poisson}
&\int_0^1 \!\! \int_0^1  \!
e^{-2\pi i (\ell \alpha-r\beta)} \sum_{n\in\Z} \sum_{\pm}
 \Log \left(1- e^{-2\pi \tau_2 \sqrt{\frac{\mu}{\tau_2}+ \left( n \pm \alpha\right)^2}+ 2 \pi i \left(n \pm \alpha \right)\tau_1\mp 2 \pi i \beta}\right)
 d\alpha d\beta \nonumber\\
&\quad=\sum_{\substack{n\in\Z \\ j\geq 1 \\ \pm}} \frac{1}{j} \int_0^1 e^{2\pi i(-\ell\alpha + (n\pm \alpha)j\tau_1) -2\pi j \tau_2\sqrt{\frac{\mu}{\tau_2}+(n\pm\alpha)^2}}d\alpha  \int_0^1 e^{2\pi i(-r\mp j)\beta} d\beta,
\end{align}
inserting the series expansion of the logarithm.
 The integral on $\beta$ now vanishes unless $j=\mp r$ in which case it equals $1$. Since $j\geq 1$ we have no solution if $r=0$. We thus assume for the remaining calculation that $r\neq 0$. We obtain that $\mp = \sgn(r)$ and $j=|r|$, thus \eqref{Poisson} equals
\begin{equation}\label{rewriteint}
\frac{1}{|r|} \sum_{n\in\Z} \int_0^1 e^{2\pi i(-\ell \alpha + (n-\sgn(r)\alpha)|r|\tau_1)-2\pi |r|\tau_2 \sqrt{\frac{\mu}{\tau_2}+(n-\sgn(r)\alpha)^2}}d\alpha.
\end{equation}
Noting that $e^{-2\pi i\ell \alpha}$ is invariant under $\alpha\mapsto \alpha +\sgn(r)n$,  \eqref{rewriteint} becomes 
\begin{equation}\label{aint}
\frac{1}{|r|} \int_\R e^{-2\pi i(\ell+r\tau_1)\alpha-2\pi |r| \tau_2\sqrt{\frac{\mu}{\tau_2}+\alpha^2}} d\alpha.
\end{equation}

We next use (26) on page 16  of \cite{Bateman}, which states that for $A,B \in \C$ with $\text{Re}(A), \text{Re}(B) >0$ 
\begin{equation*}
\int_0^\infty e^{-B \sqrt{x^2+ A^2}} \cos(xy) \text{d}x= \frac{AB}{\sqrt{y^2+ B^2}} K_1 \left(A \sqrt{y^2+ B^2} \right).
\end{equation*}
Thus \eqref{aint} becomes
\begin{equation*}
2 \sqrt{\mu \tau_2} \frac{1}{|r\tau+ \ell|} K_1 \left(2 \pi  \sqrt{\tfrac{\mu}{\tau_2}}  |r\tau + \ell| \right).
\end{equation*}
This yields
\begin{equation*}
\mathcal E_{1, \mu}(z;\tau) = 8 \pi c_{\alpha, \mu} \tau_2 + 2\sqrt{\mu_2 \tau_2}\sum_{\substack{r \in \Z \setminus \{0\} \\ \ell \in \Z}} \frac{K_1 \left( 2 \pi \sqrt{\tfrac{\mu}{\tau_2}}  |r\tau + \ell |\right)}{|r \tau + \ell|} e^{2 \pi i ( r \beta-\ell \alpha)}.
\end{equation*}
Plugging in the definition of $c_{\alpha,\mu}$ then gives the claimed Fourier expansion. 
The $c_{\alpha,\mu}$ term furnishes the $r=0$ terms that exhibit the expected polynomial growth
towards the cusp $i\infty$. 

The limit in Proposition \ref{E1muFourier} is clear using that $\lim_{x \to 0} x K_1(x)=1$.
\end{proof}
\subsection{The Mellin transform}
\label{mellintransform}
The Mellin transform of
Jacobi-form-like objects (or, vector-valued modular forms) is interesting to consider since it
can produce the corresponding Dirichlet series, or automorphic L-functions. 
Let us briefly review this, and then apply it to
our massive Jacobi forms. 
The {\it Mellin transform} (see 2.5.1 of \cite{Ni}) of a locally integrable function $f$ is a Laplace-like transform
\begin{equation*}
\mathcal M(f)(s) := \int_0^{\infty} f(x) x^{s-1} dx
\end{equation*}
that is analytic in some strip $a< {\rm Re} (s) < b$,
with its inverse (see  2.5.2 of \cite{Ni}) obtained by integration along a vertical line
shifted by any constant $c$ in the strip $a<c<b$,
\begin{equation*}
f(x)={1 \over 2\pi i}\int_{c-i\infty}^{c+i\infty}\mathcal M(f)(s) x^{-s} ds \; . 
\end{equation*}

Let us consider the Mellin transform of $\mathcal E_{1,\mu}(z;\tau)$ with respect to $\mu$
and call the Mellin-dual variable $s$. We have
\begin{proposition} \label{propmellin}
 For $s\in\C$ with Re$(s)>0$, the Mellin transform of $\mu \mapsto \mathcal E_{1, \mu}(z;\tau)$ equals 
\[
\mathcal{M}\left(\mathcal{E}_{1, .} (z; \tau)\right)(s)=
\frac{\Gamma(s)}{\pi^s} \mathbb E_{s+1}(0,z;\tau).
\]
\end{proposition}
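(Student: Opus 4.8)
The strategy is to use the Fourier expansion of $\mathcal E_{1,\mu}(z;\tau)$ from Proposition \ref{E1muFourier} and compute the Mellin transform termwise. Write
\[
\mathcal E_{1,\mu}(z;\tau)=2\sqrt{\mu\tau_2}\sum_{(r,\ell)\in\Z^2\setminus\{(0,0)\}}\frac{K_1\!\left(2\pi\sqrt{\tfrac{\mu}{\tau_2}}|r\tau+\ell|\right)}{|r\tau+\ell|}\,e^{\frac{2\pi i}{\tau_2}\mathrm{Im}((r\tau+\ell)\overline z)}.
\]
Applying $\mathcal M(\cdot)(s)=\int_0^\infty(\cdot)\,\mu^{s-1}d\mu$ and interchanging sum and integral, each term contributes a Mellin transform of $\mu\mapsto\sqrt{\mu}\,K_1(c\sqrt{\mu})$ for the appropriate constant $c=2\pi|r\tau+\ell|/\sqrt{\tau_2}$. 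After the substitution $\mu=u^2/c^2$ (so $\sqrt\mu\,d\mu \propto u^2\,du$), this reduces to the classical Mellin transform of $K_1$, namely $\int_0^\infty u^{2s-1}K_1(u)\,du=2^{2s-2}\Gamma(s)\Gamma(s+1)$ (a standard Bessel--Mellin formula, e.g.\ from 10.43.19 of \cite{Ni} or \cite{Bateman}); I would verify the exact normalization by a direct computation using \eqref{BesselK}, writing $K_1(u)=\tfrac12(\tfrac u2)\int_0^\infty e^{-x-u^2/(4x)}x^{-2}dx$ and integrating in $u$ first.

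Carrying out the bookkeeping, the prefactor $2\sqrt{\tau_2}$ together with the $c^{-2s}$ and $|r\tau+\ell|^{-1}$ factors should collapse to $\Gamma(s)\Gamma(s+1)\tau_2^{s+1}/(\pi^s|r\tau+\ell|^{2s+2})$ up to constants; comparing with the definition of $\mathbb E_{s+1}$ in \eqref{Es} (which I am told reads $\mathbb E_{s}(w,z;\tau)=\frac{\tau_2^{?}}{\pi^{?}}\sum |r\tau+\ell|^{-2s}e^{\cdots}$, with the exponential $e^{\frac{2\pi i}{\tau_2}\mathrm{Im}((r\tau+\ell)\overline z)}$ matching the one above), the powers of $\tau_2$, $\pi$, and the Gamma factors must reconcile to leave exactly $\frac{\Gamma(s)}{\pi^s}\mathbb E_{s+1}(0,z;\tau)$. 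The $0$ in the first slot of $\mathbb E_{s+1}(0,z;\tau)$ reflects that the quasiperiodicity parameter $w$ is set to zero, consistent with the form of the exponential in the Fourier expansion.

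The one genuine subtlety is \emph{justifying the interchange} of summation and Mellin integration, i.e.\ absolute convergence of $\sum_{(r,\ell)}\int_0^\infty \big|\sqrt\mu\,K_1(c_{r,\ell}\sqrt\mu)/|r\tau+\ell|\big|\mu^{\mathrm{Re}(s)-1}d\mu$. For fixed $(r,\ell)$ the inner integral is finite for $\mathrm{Re}(s)>0$ (the $K_1$ term is integrable at $0$ since $xK_1(x)\to1$, giving $\mu^{\mathrm{Re}(s)-1}$ near $0$, and decays exponentially at $\infty$); after evaluating it the $(r,\ell)$-sum is $\sum|r\tau+\ell|^{-2\mathrm{Re}(s)-2}$, which converges for $\mathrm{Re}(s)>0$. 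This is the main point to state carefully; everything else is the substitution plus the standard $K_1$ Mellin evaluation and matching constants against \eqref{Es}. I would also remark that the term $8\pi c_{\alpha,\mu}\tau_2$ from the proof of Proposition \ref{E1muFourier} has already been absorbed into the $(r,\ell)$-sum (it is precisely the $r=0$, $\ell\neq0$ contribution packaged via $c_{\alpha,\mu}$), so no separate treatment is needed.
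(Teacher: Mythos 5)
Your proposal follows essentially the same route as the paper's proof: termwise Mellin transformation of the Bessel-sum expansion from Proposition \ref{E1muFourier}, the substitution reducing each term to the classical Mellin transform of $K_1$ via 10.43.19 of \cite{Ni}, and the interchange of sum and integral justified by the exponential decay of $K_1$. The only quibble is that the correct reduced integral is $\int_0^\infty K_1(u)\,u^{2s}\,du$ rather than $\int_0^\infty K_1(u)\,u^{2s-1}\,du$ (your exponent is off by one), but since you explicitly defer the normalization to a direct check, this is a slip in bookkeeping rather than a gap.
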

\begin{proof}
Plugging in Proposition \ref{E1muFourier} gives that the Mellin transform of $\mu \mapsto \mathcal E_{1,\mu} (z;\tau)$ is 
\begin{equation*}
\int_0^\infty \!\!\!\!\mathcal E_{1,\mu} (z;\tau) \mu^{s-1} d\mu 
=\!\!\int_0^{\infty} \!\!  2\sqrt{\mu \tau_2}\!\!\!\!\!\!\!\! \!\!\!\!\sum_{(r,\ell) \in\Z^2\setminus\{(0,0)\}}\!\!\!\! \!\!\!\!\!\!\!\!{ K_1\left(2\pi \sqrt{\mu \over \tau_2}|r  \tau+ \ell|\right) \over
	|r  \tau+ \ell|}e^{2\pi i( r\beta-\ell \alpha)} \mu^{s-1} d\mu.
\end{equation*}
Since the sum
converges absolutely for $\mu>0$ (10.25.3 of \cite{Ni} gives
the exponential decay $K_1(x) \sim \sqrt{\pi \over 2x}e^{-x}$ for $ x \rightarrow \infty$), we may interchange summation and integration.
We now compute, making the change of variables $x=2\pi \sqrt{\tfrac {\mu}{\tau_2}} |r \tau+ \ell|$
\begin{align*}
\int_0^\infty K_1 \left( 2  \pi  \sqrt{\tfrac {\mu}{\tau_2}} |r \tau+ \ell| \right) \mu^{s- \tfrac 12} d \mu
= 2 \left( \frac{\tau_2}{4 \pi^2 |r \tau+\ell|^2} \right) ^{s+\tfrac 12} \int_0^\infty K_1(x) x^{2s} dx.
\end{align*}
Now 10.43.19 of \cite{Ni} states that for $a,b \in \C$ with $\text{Re}(a)<\text{Re}(b)$ we have
\begin{equation*}
\int_0^{\infty} K_a(x) x^{b-1} dx= 2^{b-2} \Gamma\left( \tfrac{b-a}{2} \right) \Gamma \left( \tfrac{a+b}{2} \right).
\end{equation*}
From this it not hard to conclude the claim.
\end{proof}
Note that as long as  $\text{Re}(a)<\text{Re}(b)$, it is straightforward to generalize Proposition
\ref{propmellin} to other $\mathcal E_{s,\mu}(z;\tau) $ than $\mathcal E_{1,\mu}(z;\tau) $. It is  surprising that an integral transform of the massive ($\mu>0$) Kronecker--Eisenstein series produces the  massless  
($\mu=0$) Kronecker--Eisenstein series.
This means that we  can take the inverse Mellin transform 
of the classical Kronecker--Eisenstein series to obtain our main example, namely
\begin{align} \label{invmellinZ}
\mathcal E_{1,\mu}(z;\tau) &=  {1 \over 2\pi i}\int_{c-i\infty}^{c+i\infty}  (\pi\mu)^{-s} \Gamma(s)\mathbb E_{s+1}(0;z; \tau)ds,
\end{align}
where as in Subsection \ref{mellintransform}, $c$ lies in the region of convergence, here the positive real axis. 
In Appendix A of \cite{Ta}, it was  proven 
that $\mathcal E_{1,\mu}(z;\tau)$ transforms like a Jacobi form of weight and index zero. Equation \eqref{invmellinZ} gives a reproof of this fact using the modular invariance of  the undeformed $\mathbb E_{s+1}(0;z;\tau)$ (see e.g.\ Section 4 of \cite{Siegel}), since $\mu$ is invariant.

Proposition \ref{propmellin} and its inverse equation \eqref{invmellinZ} may be useful in the following sense.
There is a vast literature in string theory
where integrals over sums of products
of Kronecker--Eisenstein series were performed. Particularly relevant examples
here include \cite{Broedel:2014vla,Kiritsis:1997em,Lerche:1987qk,Stieberger:2002wk}.
In the last few years, this long-standing theme in string theory  has been put in mathematical
terms as  modular graph functions \cite{DHoker:2015wxz},  we give an example
of this in Section \ref{modulargraph}.   Proposition \ref{propmellin} 
and equation  \eqref{invmellinZ} open up the possibility
to generalize some of that vast literature on  massless objects ($\mu=0$, flat space) to mass-deformed objects ($\mu \neq 0$, as in the plane gravitational wave) simply by representing them as inverse Mellin transforms
of the well-studied massless objects.

\subsection{An alternative formal representation}
\label{alternative}
The goal of this subsection is to introduce an additional parameter $w=A\tau+B$ in $\mathcal E_{1,\mu}$,
making it quasiperiodic in $z$ in the sense of  equation \eqref{E2}. To be more precise a function $f (w,z; \tau): \mathbb{C}^2\times \mathbb{H}\to\mathbb{C}$ is called \textit{quasiperiodic in $z$} if
\begin{equation}
f(w,z+1;\tau)= e^{2\pi i A} f(w,z ;\tau) \; , \quad
 f(w, z+\tau;\tau)=e^{-2\pi i B} f(w,z;\tau).
\end{equation}
For this, we take \eqref{invmellinZ} as starting point and use  \eqref{intrep} to introduce a nonzero first argument $\mathbb E_{s+1}(w,z;\tau)$. This turns out to provide a representation of the massive Kronecker--Eisenstein series as a power series in $\mu$. For this define
\[
\mathcal E_{1,\mu}(w,z;\tau) := \int_{c-i\infty}^{c+i\infty} (\pi \mu)^{-s} \Gamma(s)\mathbb E_{s+1}(w,z;\tau) ds.
\]
\begin{proposition}
\label{Etildeprop}
We have
\begin{equation}  \label{Etilderep}
\mathcal E_{1,\mu}(w,z;\tau) 
=e^{{2\pi i \over \tau_2} {\rm Im}(w \overline{z})} \sum_{n\geq0} {(-\pi \mu)^n \over n!} \mathbb E_{n}(z,w;\tau)
\end{equation}
with ${ \mathbb E}_{s+1}(w,z;\tau)$ defined in \eqref{intrep}. In particular $\mathcal E_{1,\mu}(w,z;\tau)$ is quasiperiodic in $z$.
\end{proposition}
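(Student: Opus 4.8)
The plan is to evaluate $\mathcal E_{1,\mu}(w,z;\tau)$ by pushing the contour in its Mellin--Barnes representation to the left and collecting residues, in the spirit of reading off a small-parameter expansion from the poles of a Mellin transform. Starting from the definition preceding the statement (with the normalisation $\tfrac{1}{2\pi i}$ understood, as in \eqref{invmellinZ}), I would move the line $\operatorname{Re}(s)=c>0$ across the poles at $s=0,-1,-2,\dots$. By the integral representation \eqref{intrep}, for $w\notin\bbQ\tau+\bbQ$ the function $s\mapsto\mathbb E_{s+1}(w,z;\tau)$ is entire --- in particular it has no pole at $s=0$, which is the analytic counterpart of the remark below \eqref{LaplaceG} that the constant term in \eqref{LaplaceG} disappears for such $w$ --- so the only poles of the integrand are the simple poles of $\Gamma(s)$, with $\operatorname{Res}_{s=-n}\Gamma(s)=(-1)^n/n!$. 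Hence $\operatorname{Res}_{s=-n}\big[(\pi\mu)^{-s}\Gamma(s)\mathbb E_{s+1}(w,z;\tau)\big]=\tfrac{(-\pi\mu)^n}{n!}\,\mathbb E_{1-n}(w,z;\tau)$.

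To turn the formal sum of residues into an identity I would use the two standard estimates on vertical strips: Stirling gives super-polynomial decay of $\Gamma(\sigma+it)$ in $|t|$, while \eqref{intrep} (or a convexity argument) gives at most polynomial growth of $\mathbb E_{s+1}(w,z;\tau)$ in $|t|$, so the horizontal segments of the shifted contour vanish in the limit; and to control the remaining integral along $\operatorname{Re}(s)=-N-\tfrac12$ one applies the reflection formula for $\Gamma$ together with the functional equation below, expressing $\mathbb E_{-N+\frac12+it}(w,z;\tau)$ through the absolutely convergent series for $\mathbb E_{N+\frac12-it}(z,w;\tau)$. This bounds the tail by $O\!\big((\mu/\mu_0(z,w,\tau))^N\big)$ times a harmless factor, for an explicit threshold $\mu_0>0$ (the $n\le 1$ summands being read off as the usual Kronecker-limit-formula values). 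Letting $N\to\infty$ then yields, for $0<\mu<\mu_0$, the convergent expansion
\[
\mathcal E_{1,\mu}(w,z;\tau)=\sum_{n\ge0}\frac{(-\pi\mu)^n}{n!}\,\mathbb E_{1-n}(w,z;\tau).
\]

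Finally I would invoke the functional equation of the Kronecker--Eisenstein series --- obtained by Poisson summation over $\bbZ\tau+\bbZ$ and built into \eqref{intrep} --- in the normalisation
\[
\mathbb E_{1-s}(w,z;\tau)=e^{\frac{2\pi i}{\tau_2}\operatorname{Im}(w\overline z)}\,\mathbb E_{s}(z,w;\tau),
\]
and evaluate it at $s=n$; substituting termwise gives exactly \eqref{Etilderep}. Quasiperiodicity in $z$ is then immediate from that formula: each $\mathbb E_n(z,w;\tau)$ is genuinely $(\bbZ\tau+\bbZ)$-periodic in its first argument $z$, and writing $w=A\tau+B$ one checks in one line that $e^{\frac{2\pi i}{\tau_2}\operatorname{Im}(w\overline z)}$ is multiplied by $e^{2\pi i A}$ under $z\mapsto z+1$ and by $e^{-2\pi i B}$ under $z\mapsto z+\tau$, which is precisely the required transformation law.

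I expect the only real work to be the contour-shift justification --- establishing the uniform-in-$N$ growth bound on $\mathbb E_{s+1}(w,z;\tau)$ as $\operatorname{Re}(s)\to-\infty$ and thereby pinning down the radius of convergence $\mu_0$ --- since the residue bookkeeping for $\Gamma$ and the quasiperiodicity check are routine once the meromorphic continuation and functional equation furnished by \eqref{intrep} are in hand.
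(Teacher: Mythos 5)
Your proposal is correct and follows essentially the same route as the paper: shift the inverse Mellin contour to the left, collect the residues of $\Gamma(s)$ at $s=-n$, and convert $\mathbb E_{1-n}(w,z;\tau)$ into $e^{\frac{2\pi i}{\tau_2}\operatorname{Im}(w\overline z)}\mathbb E_n(z,w;\tau)$ via the functional equation \eqref{E4}, with the growth of $\mathbb E_s$ on vertical lines controlling the discarded part of the contour. Your explicit attention to the finite radius of convergence $\mu_0$ is a genuine (and correct) refinement of the paper's footnote, consistent with the paper's later restriction to $\mu<\tfrac12$ when expanding term by term.
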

\begin{proof}
For $z \notin {\mathbb Z} \tau + {\mathbb Z} $, the integral representation in equation \eqref{intrep}
extends $\mathbb E_s(w,z;\tau)$ to all $s$. 
We integrate along the vertical path of integration
in the inverse Mellin transform \eqref{invmellinZ} and close the contour along a semicircle at infinity around $s\rightarrow -\infty$.
Using the functional equation \eqref{E4}, we receive contributions from residues at the poles\footnote{This uses that $\mathbb E_s(w,z;\tau)$ for large positive $s$ does not ruin the exponential decay
due to the Gamma function for large negative $s$.
For example, for $w=0$, the author of \cite{Siegel} showed on p. 23, that  $\mathbb E_n(z,0;\tau)$ is majorized by $\zeta(2s)$ (with $\zeta$ the Riemann zeta-function),
and $\zeta(2s)$ decays as $2^{-s}$ as $s\rightarrow \infty$, whereas it would need to grow 
to ruin the decay due to the Gamma function. 
Similarly, for $w \neq 0$ on p.~42 of  \cite{Siegel} (set $g =0$ there),  $\mathbb E_n(z,w;\tau)$ was  majorized by the Hurwitz zeta function.}
of  $\Gamma(s)$ at $s=-n$ for $n\geq 0$, producing
 $e^{{2\pi i \over \tau_2} {\rm Im}(w \overline{z})} \frac{(-1)^n}{n!} \mu^n \mathbb E_{n}(z,w;\tau)$ as residue. 
\end{proof}
Note that if we try to use the double sum representation from equation \eqref{EsEarly} to perform the sum on $r$ and $\ell$ before the inverse Mellin transform \eqref{invmellinZ}, the double sum representation
does not converge for Re$(s)<0$, where we pick up the residues of $\Gamma(s)$. That is not a problem as long as we use the analytic continuation \eqref{intrep}.

Note that the power series in $\mu$ in \eqref{Etilderep} has no mixed $\mu^n \log (\mu)$ terms,
unlike Proposition \ref{E1muFourier} when the Bessel function is expanded in $\mu$. 

We can now use this expansion in $\mu$
to  connect to existing results in the literature, in particular \cite{DHoker:2015gmr}. 
For this purpose, we assume that $\mu<\frac12$,
and focus on  the special case $w=0$. We obtain from \eqref{Etilderep}, setting $\mathcal E_{1,\mu}(z;\tau) :=\mathcal E_{1,\mu}(z,0;\tau)$
\begin{align}   \nonumber
\mathcal E_{1,\mu}(z;\tau)    
&= {\mathbb E}_{0}(z,0;\tau)-\pi \mu \mathbb E_1(z,0;\tau)+\!\!\sum_{n\geq2}\!\! {(-\pi \mu)^n \over n}  \!\left(\tau_2 \over \pi\right)^{\!n} \!\!\sideset{}{^*}\sum_{r,\ell}\!\!{1 \over |z+r \tau+\ell|^{2n}} \\
&= {\mathbb E}_{1}(0,z;\tau)-\pi \mu \mathbb E_1(z,0;\tau)+\sideset{}{^*}\sum_{r,\ell}  \sum_{n\geq2} {1 \over n} \left({-\mu  \tau_2 \over  |z+r \tau+\ell|^2}\right)^n, \label{E1summu}
\end{align}
where the summation $\Sigma^*$ indicates that the sum runs over all $(r,\ell) \in\Z^2$ such that $w+r \tau+\ell\neq 0$.
This expression can be further manipulated to a double sum of logarithms, but convergence becomes somewhat more complicated and we therefore do not do so here. This last expression has the nice feature that the massless Green's function $\mathbb E_1 (0,z;\tau)$
appears as a separate first term, and the remaining part manifestly vanishes for $\mu=0$,
making the limit $\mu\rightarrow 0^+$ more manifest than in \eqref{Zlimit2}, Proposition \ref{E1muFourier}, and Proposition \ref{propmellin}.

In anticipation of the comparison to string theory literature in Section \ref{compare} below, note that 
\begin{eqnarray*}  \label{WDHoker}
\hspace{-0.9em} -\left[\partial^2_{ \mu}\left(\mathcal E_{1,\mu}(0,z;\tau)\right)\right]_{z=0}&=&\left[\sideset{}{^*}\sum_{r,\ell}{ \tau_2^2 \over (|z+r \tau+\ell|^2+\mu \tau_2)^2} \right]_{z=0}\\
&=&\sideset{}{^*}\sum_{r,\ell}{ \tau_2^2 \over (|r \tau+\ell|^2+\mu \tau_2)^2} \, . 
\end{eqnarray*} 
This is essentially a simpler version of the function $\mathcal W$ occurring in Section 4 of \cite{DHoker:2015gmr}, a generating function of (massless) modular graph functions.

\section{Comparison to string theory literature}
\label{compare}
This section is mainly intended for physics readers, or mathematicians who are curious why
physicists might be interested in the objects we study here. 

In string theory, just as in quantum field theory, it is natural to consider massive worldsheet fields,
either in a gravitational wave background, as  in \cite{Berenstein:2002jq}, or as a technical trick in flat space, as in \cite{Lee:2006pa}, including as generating function of modular graph functions \cite{DHoker:2015gmr}.

\subsection{Massive modular graph functions}
\label{modulargraph}

There is recent interest in modular graph functions \cite{DHoker:2015wxz},
that are constructed by integrating various combinations of (massless) Green's functions
over a fundamental domain for the action of the lattice $\Z \tau + \Z$. 
The simplest nontrivial example of a modular graph function is 
just the non-holomorphic Eisenstein series $\mathbb E_2(0;\tau)$, arising
from the integral over a fundamental domain $P$  of the product of two  Kronecker--Eisenstein series:
 $\mathbb E_1(z;\tau) \mathbb E_1 (-z;\tau)$,
viewed as Green's functions for the Laplace equation on the torus. 

A simple corollary of the considerations in this paper 
is that one can replace $\mathbb E_1(z;\tau)$ $\mathbb E_1(-z;\tau)$ with
$\mathcal E_{1,\mu} (z;\tau) \mathcal E_{1,\mu}(-z;\tau) $
to create a mass-deformed modular graph function that we might call \newline $\mathcal E_{1,1,\mu}(0;\tau)$. We integrate over the region $P$ with corners at the complex numbers $z=0$, $z=1$, $z=\tau$, and $z=\tau+1$, 
a fundamental domain for $\C / (\Z \tau + \Z)$. 
The integration is 
\begin{align}
\mathcal E_{1,1,\mu}(0;\tau)&= {1 \over \tau_2}\int_{ P} \mathcal E_{1,\mu}(z;\tau)\mathcal E_{1,\mu}(-z;\tau) d^2 z \nonumber\\
&\hspace{-1cm}=\int_0^{1} \int_0^{1}  
 \sideset{}{^*}\sum_{r_1,\ell_1}{  {\sqrt{\mu  \tau_2}} K_1(2\pi {\sqrt{\mu \over \tau_2}}  | r_1 \tau+\ell_1|) \over  | r_1 \tau+\ell_1|} e^{2\pi i(r_1\beta-\ell_1 \alpha)}\nonumber\\
&\hspace{2cm}\times  
 \sideset{}{^*}\sum_{r_2,\ell_2}{ {\sqrt{\mu \tau_2}} K_1(2\pi {\sqrt{\mu \over \tau_2}}  | r_2 \tau+\ell_2|) \over | r_2 \tau+\ell_2|} e^{2\pi i(\ell_2\alpha-r_2\beta)}d\alpha  d\beta\label{modgraph}
  \\
&\hspace{-1cm}= \mu\tau_2 \sideset{}{^*}\sum_{r_1,\ell_1} \sideset{}{^*}\sum_{r_2,\ell_2}{  K_1\left(2\pi {\sqrt{\mu \over \tau_2}}  | r_1 \tau+ \ell_1|\right) \over  | r_1 \tau+ \ell_1|}
 { K_1\left(2\pi {\sqrt{\mu \over \tau_2}}  | r_2 \tau+ \ell_2|\right) \over | r_2 \tau+ \ell_2|}\delta_{r_1,r_2}\delta_{\ell_1,\ell_2}
 \nonumber\\
 &\hspace{-1cm}=  \mu\tau_2 \sideset{}{^*}\sum_{r,\ell}  {K_1\left(2\pi {\sqrt{\mu \over \tau_2}}   | r \tau+ \ell|\right)^2 \over  | r \tau+ \ell|^2} \; . 
  \nonumber
\end{align}
For $\mu\rightarrow 0^+$, this clearly reduces to the massless modular graph function $\mathbb E_2(0;\tau)$.
In the third equality above, the integration over $\alpha,\beta$ produces factors like
 $\int_0^{1} e^{2\pi i(r_1-r_2)\alpha} \, d\alpha = \delta_{r_1,r_2}$ for $r_j\in\bbZ$, where $\delta_{r_1,r_2}=0$ unless $r_1=r_2$ in which case it equals 1.  This collapses
the two double sums in $\mathcal E_{1,\mu}(z;\tau)\mathcal E_{1,\mu}(-z;\tau) $  to a single double sum.
 
Note  that in the differential equations in the sections above, we view
$\tau$ and $z$ as independent variables. Here, $\tau$ is considered to be fixed and we integrate
over $z$, and are free to
change variables of integration from $z$ to $(\alpha,\beta)$
as  independent real variables, with Jacobian $\tau_2$. 

In this calculation, unlike in its massless counterpart, each double sum converges exponentially.
The undeformed $\mu=0$ eigenvalue equation is
$\Delta_{\tau,0}({\mathbb E}_{2}(0;\tau))=-2{\mathbb E}_{2}(0;\tau)$. 
For the $\mu$-deformed modular graph function, we have the following: 
\begin{proposition} \label{massiveE2}
We have 
\begin{equation*}\left(\Delta_{\tau,0}-2\mu\partial_{\mu}+\mu^2\partial_{\mu}^2\right){\mathcal E}_{1,1,\mu}(0;\tau) = -2{\mathcal E}_{1,1,\mu}(0;\tau).
\end{equation*}
\end{proposition}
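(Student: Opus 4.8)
The plan is to work directly from the final single-sum expression for $\mathcal E_{1,1,\mu}(0;\tau)$ obtained in \eqref{modgraph}, namely
\begin{equation*}
\mathcal E_{1,1,\mu}(0;\tau)=\mu\tau_2\sideset{}{^*}\sum_{r,\ell}\frac{K_1\!\left(2\pi\sqrt{\tfrac{\mu}{\tau_2}}\,|r\tau+\ell|\right)^2}{|r\tau+\ell|^2},
\end{equation*}
and to verify the claimed identity term by term, using the exponential decay of $K_1$ (and of its derivatives) to justify differentiating the sum summand-wise. Write $y:=|r\tau+\ell|$ and $u:=u(\mu,\tau):=2\pi\sqrt{\mu/\tau_2}\,y$, so that a typical summand is $g_{r,\ell}(\mu,\tau):=\mu\tau_2\,y^{-2}K_1(u)^2$. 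The whole computation reduces to showing that the single function $G(\mu,\tau):=\mu\tau_2\,y^{-2}K_1\!\left(2\pi\sqrt{\mu/\tau_2}\,y\right)^2$ satisfies $\left(\Delta_{\tau,0}-2\mu\partial_\mu+\mu^2\partial_\mu^2+2\right)G=0$ for each fixed nonzero lattice vector, and then summing.

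First I would record the two basic differential facts I need. On the $\mu$-side, since $u$ depends on $\mu$ only through $\sqrt{\mu}$, the operator $\mu\partial_\mu$ acts as $\tfrac12 u\partial_u$; hence $-2\mu\partial_\mu+\mu^2\partial_\mu^2$ is a second-order operator in $u$ with polynomial coefficients, which I will expand explicitly. On the $\tau$-side, the key observation — already exploited in the proof of Theorem \ref{Jacobiexamples} — is that $\Delta_{\tau,0}$ acts on any function of the combination $|r\tau+\ell|^2/\tau_2$ in a controlled way; more precisely one computes $\Delta_{\tau,0}\big(F(|r\tau+\ell|^2/\tau_2)\big)$ and finds it is again a second-order operator in the variable $X:=|r\tau+\ell|^2/\tau_2$ with polynomial-in-$X$ coefficients (this is exactly the computation displayed in the proof of Theorem \ref{Maassex} with $b=1$). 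Since $u=2\pi\sqrt{\mu X}$, both $\Delta_{\tau,0}$ and $\mu^2\partial_\mu^2-2\mu\partial_\mu$, acting on $G$, become differential operators in the single variable $u$ with the \emph{same} structure; the function $G$ itself is (a power of $\mu$ times) $u^{-2}K_1(u)^2$ up to the harmless prefactor, so everything collapses to an ODE identity in $u$ for $\psi(u):=K_1(u)^2$.

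The heart of the matter is therefore this: use the Bessel equation \eqref{BesselODE} for $K_1$, i.e. $u^2K_1''+uK_1'-(u^2+1)K_1=0$, to derive the third-order (in fact, effectively a known closed form) ODE satisfied by the product $\psi=K_1^2$. Differentiating $\psi$ twice and substituting $K_1''=\big((u^2+1)K_1-uK_1'\big)/u^2$ repeatedly expresses $\psi''$ and $\psi'''$ in terms of $\psi$, $\psi'=2K_1K_1'$, and $K_1'^2$; eliminating $K_1'^2$ using $\psi'=2K_1K_1'$ together with one more relation yields a linear ODE for $\psi$ with polynomial coefficients. Feeding this into the operator assembled in the previous paragraph should produce an identically vanishing combination, which is precisely the statement $\big(\Delta_{\tau,0}-2\mu\partial_\mu+\mu^2\partial_\mu^2+2\big)G=0$. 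Finally, summing over $(r,\ell)\in\Z^2\setminus\{(0,0)\}$ — legitimate because $K_1(u)$ and all needed derivatives decay exponentially in $u$, uniformly on compact sets in $\tau$ and $\mu$ — gives the proposition, and one checks the $\mu\to0^+$ consistency against $\Delta_{\tau,0}\mathbb E_2(0;\tau)=-2\mathbb E_2(0;\tau)$ as a sanity check.

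\textbf{Main obstacle.} The bookkeeping in the previous paragraph is where the real work lies: one must get the ODE for $K_1^2$ exactly right and then combine it correctly with the (separately somewhat lengthy) explicit forms of $\Delta_{\tau,0}G$ and $(\mu^2\partial_\mu^2-2\mu\partial_\mu)G$, keeping track of the $\mu\tau_2 y^{-2}$ prefactor, which itself contributes cross terms when $\Delta_{\tau,0}$ is applied (it is not annihilated by $\Delta_{\tau,0}$, since $\Delta_{\tau,0}(\tau_2/|r\tau+\ell|^2)\ne0$ in general). I expect the cleanest route is to absorb the prefactor by writing $G=\mu^{1/2}\cdot\big(\mu^{1/2}\tau_2 y^{-2}\big)K_1(u)^2$ and noticing $\mu^{1/2}\tau_2 y^{-2}=\mu^{1/2}/X$, so $G$ is literally a function of $\mu$ and $X$ alone, of the scaling form $G=X^{-1}\mu^{1/2}\cdot\psi(2\pi\sqrt{\mu X})$; then the coefficient-matching trick used in Theorems \ref{Jacobiexamples} and \ref{Maassex} applies verbatim and the whole identity drops out of a one-variable ODE computation. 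So the obstacle is purely computational stamina, not a conceptual gap.
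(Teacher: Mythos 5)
Your overall strategy --- differentiate the single sum \eqref{modgraph} term by term and reduce everything to a one-variable computation in $X=|r\tau+\ell|^2/\tau_2$ (equivalently $u=2\pi\sqrt{\mu X}$) --- is exactly the paper's, and your closing remark that the coefficient-matching trick of Theorem \ref{Maassex} should apply is the right instinct. But the step you single out as ``the heart of the matter'' is a wrong turn. No ODE for $K_1$, and no derived ODE for $K_1^2$, enters the proof at all: the paper applies $\Delta_{\tau,0}$, $\mu^2\partial_\mu^2$, and $-2\mu\partial_\mu$ to a single term $\frac{\mu\tau_2}{|r\tau+\ell|^2}\,f(X_\mu(\tau))^2$ with $f$ an \emph{arbitrary} smooth function, and finds that the $f f''+(f')^2$ contributions of $\Delta_{\tau,0}$ and of $\mu^2\partial_\mu^2$ cancel identically, while the surviving $ff'$ terms are absorbed by $-2\mu\partial_\mu$ up to the constant $-2$; the proof explicitly records that no property of $f$ is used. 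Structurally, each term is $\mu$ times a function of the shape treated in Theorem \ref{Maassex} (with $b=1$), so the identity is forced by that theorem together with the elementary commutation of multiplication by $\mu$ past $\mu\partial_\mu$ and $\mu^2\partial_\mu^2$.

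Beyond being unnecessary, the mechanism you propose would not function as stated. The product of two solutions of the second-order equation \eqref{BesselODE} satisfies a \emph{third-order} linear ODE; such a relation only lets you eliminate $\psi'''$ in favor of $\psi,\psi',\psi''$, and $\psi'''$ never appears in $\bigl(\Delta_{\tau,0}-2\mu\partial_\mu+\mu^2\partial_\mu^2+2\bigr)G$, which is second order in $\psi$. So the ODE for $K_1^2$ supplies no usable relation among $\psi$, $\psi'$, $\psi''$, and ``feeding it in'' cannot produce the vanishing --- that vanishing has to be (and is) an identity holding for arbitrary $\psi$. If you carry out the computation you have set up, you will discover that the Bessel equation is never invoked; I would reorganize the argument so that the arbitrary-$f$ cancellation is the stated core, with $f=K_1$ entering only through the exponential-decay justification for termwise differentiation, which you handle correctly.
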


\begin{proof}
The double sum in ${\mathcal E}_{1,1,\mu}(0;\tau)$ in \eqref{modgraph} converges absolutely
 so we can differentiate term by term. Each term has the form
is ${\mathcal E}_{1,1,\mu,r,\ell} (\tau)
=\frac{\mu\tau_2}{\omega} f(X_{\mu}(\tau))^2$ for some function $f$,
where $X_\mu(\tau):=2\pi \sqrt{\mu \omega(\tau) \over \tau_2}$ with $\omega (\tau) := |r\tau+\ell|^2$.
We find that 
\begin{multline*}
\frac{\Delta_{\tau,0}(\mathcal E_{1,1, \mu, r , \ell})(\tau)}{\mathcal E_{1,1, \mu, r , \ell}(\tau)}
= \frac{\pi \mu w(\tau)}{2 f (X_\mu (\tau))^2 \tau_2} \Big( 2 \sqrt{\tfrac{\tau_2}{\mu w(\tau)}} f(X_\mu(\tau)) f'(X_\mu(\tau)) \Big. \\
\Big. - 4 \pi \left(  f(X_\mu(\tau))  f''(X_\mu(\tau)) + f'(X_\mu(\tau))^2\right)
\Big).
\end{multline*}
The action of $\mu^2\partial_{\mu}^2$ is similar, namely
\begin{multline*}
\frac{\mu^2 \partial_\mu^2(\mathcal E_{1,1,r,\mu,\ell}(\tau))}{\mathcal E_{1,1,r,\mu,\ell}(\tau)} = \frac{\pi \mu w(\tau)}{2 \tau_2 f(X_\mu(\tau))^2} \left( \frac{6 \sqrt{\tau_2} f(X_\mu(\tau))f'(X_\mu(\tau))}{\sqrt{\mu w(\tau)}}\right.\\ \left.
+ 4\pi \( f(X_\mu(\tau))f''(X_\mu(\tau)) +  f'\(X_\mu(\tau)\)^2 \)\right).
\end{multline*}
Finally, we compute
\begin{equation*}
{-2\mu\partial_{\mu}({\mathcal E}_{1,1,\mu,r,\ell})(\tau) \over {\mathcal E}_{1,1,\mu,r,\ell}(\tau) }
=-2-4\pi \sqrt{\tfrac{\mu w}{\tau_2}} \frac{f'(X_\mu(\tau))}{f(X_{\mu}(\tau))}.
\end{equation*}
Combining gives the claim, without using any properties of $f$. 
\end{proof}
We are writing Proposition \ref{massiveE2} in the form above to make the relation to 
the undeformed modular graph function explicit. 
In terms of the massive Maass forms in Theorem \ref{Maassex},
we move the $\mu$-terms in the differential operator to the right, namely 
\begin{equation*} \Delta_{\tau,0}\left({\mathcal E}_{1,1,\mu}\right)(0;\tau) = \left(-\mu^2\partial_{\mu}^2+2\mu\partial_{\mu}-2\right){\mathcal E}_{1,1,\mu}(0;\tau) \; ,
\end{equation*}
so we identify $g_2(\mu)=-\mu^2$, $g_1(\mu)=2\mu$, and $g_0(\mu)=-2$. 
In the proof of Theorem \ref{Maassex}, we have
 $g_2(\mu)=-b^2\mu^2$, $g_1(\mu)=-\left(b^2+2bc+b\right)\mu$, and $g_0(\mu)=-c^2-c$, 
 so this corresponds to $(b,c)=(1,-2)$, since we are requiring $b>0$. 
But Theorem \ref{Maassex} holds for $f_{\mu,r,\ell}(\tau)= \frac{|r\tau+\ell|^{2c}}{\tau_2^c}h(\frac{\mu|r\tau+\ell|^{2b}}{\tau_2^b})$
for any $h$, and the proof of Proposition \ref{massiveE2} 
holds for any $f$, so there is the question of equivalence relations
as discussed in Section \ref{subsec:exp}. Indeed, the example we are discussing below Theorem \ref{Maassex}
has $K_2$ in the double sum for ${\mathcal E}_{2,\mu}(0;\tau)$, not $K_1^2$ as we have here in ${\mathcal E}_{1,1,\mu}(0;\tau)$
(hence the notation ``1,1''). 
There are many relations between Bessel functions, e.g.\  recurrence relations, that 
need to be taken into account in a systematic study of connections between massive Maass forms
and massive modular graph functions. The objective here is
just to provide one entry
point into those connections,
and we leave such investigations to the future. 

\subsection{Helmholtz equation}
\label{helmholtz}

The ordinary meaning of the ``massive'' Laplace equation
is the Helmholtz equation, i.e., its Green's function is a solution of 
\begin{equation*}
\left(2 \partial_{z}\partial_{\overline{z}} - m^2\right)G(z;\tau) = -\delta^{[2]}(z;\tau) \; . 
\end{equation*}
The differential operator $2 \partial_{z}\partial_{\overline{z}}$  is not invariant under
the Jacobi group, since e.g.\ under an $S$ transformation, $z\mapsto \frac z\tau$. Multiplying by $\tau_2$ brings the differential operator to our $\Delta_{z,0,0}=2\tau_2 \partial_{z}\partial_{\overline{z}}$,
and $m^2$ is completed to the invariant $\tau_2 m^2=\mu$.
Finding the solution for $G(z;\tau)$ as a double sum is a standard exercise
if $\tau_1=0$, see e.g. Appendix E of \cite{Lee:2006pa}. 
Using the basis functions $e^{2\pi i (r \beta -  \ell \alpha)}$ as
for ${\mathbb E}(z;\tau)$, we find that in coordinates
$z=\alpha \tau+\beta$,   we have $\Delta_{z,0,0}=2\tau_2 \partial_{z}\partial_{\overline{z}} =\frac1{\tau_2}(\tau_2^2\partial_{\beta}^2+\partial_{\alpha}^2)$, and
we multiply the differential operator with $\tau_2$ to find
\begin{equation}  \label{Gsum}
G(z;\tau): =  \sum_{(r, \ell)\in\Z^2}{e^{2\pi i (r \beta -  \ell \alpha)} \over 4\pi^2(r^2\tau_2^2+\ell^2) + \mu}.
\end{equation}
Integrating in $\alpha$ and $\beta$ gives
\begin{equation*}
\int_0^1 \int_0^1 G(z;\tau)\, d\alpha  d\beta = \frac{1}{\mu}.
\end{equation*} 
In physics, one may want to normalize this to unity
by multiplying $G(z;\tau) $ by $\mu$, but we do not do so here.
Note that if we remove the term $(r,\ell)=(0,0)$ and set $\mu=0$, then we obtain the Kronecker--Eisenstein series $\mathbb E_1(z;i \tau_2)$,
up to normalization.
Following e.g.\ Appendix E of \cite{Lee:2006pa}, we can write the summand in \eqref{Gsum} as an integral over a parameter $s$, then do modular inversion of  the sum over $\ell$, and evaluate the integral in $s$. For  $
\mu=0$
this gives a logarithm of $|\vartheta_1(z;i\tau_2)|$, as in 
 Kronecker's second limit formula \eqref{Estheta}. The result
for $\mu\neq 0$ is 
\begin{equation*}
G(z;\tau) =  \sum_{(r, \ell)\in\Z^2} {e^{-2\pi\tau_2
\sqrt{4\pi^2 r^2\tau_2^2+\mu} |\ell -\alpha|}e^{2\pi ir\beta} \over 2\sqrt{4\pi^2 r^2\tau_2^2 + \mu}} \; .
\end{equation*}
The inversion followed by the integration in  $s$ caused $r$ and $\ell$ to play different roles in the summation. This is not surprising,
as we single out the sum on $\ell$ by hand. 
In particular, if  we now let  $\mu \rightarrow0^+$, then in  terms with $r=0$, the  dependence on $\ell $ and $\beta$ is trivial which causes
the sum on $\ell$ to diverge,
even though that is not the case before those manipulations. 
One way to regularize  is to
 subtract the massive point-particle Green's function $G_{\rm particle}(\beta):=\frac{\cosh(m(|\beta|-\pi\tau_2))}{m \sinh(\pi m \tau_2)}$, which   is also (quadratically) divergent as $m\rightarrow 0$. Related discussions appear 
e.g.\ in \cite{Polchinski:1998rq} Appendix A. The appearance of the divergence is similar to the elementary sum
\begin{equation}  \label{point}
\sum_{\ell\geq1} {1 \over {\ell^2+m^2}} = {\pi \coth (\pi m) \over 2m}-{1 \over 2m^2}.
\end{equation}
The limit of the ``coth'' term does not exist  for $m\rightarrow 0$, but clearly the left-hand side is not divergent, and indeed the term $1 \over 2m^2$ subtracts the principal part. 
This is close in spirit to \eqref{E1summu}. There, we ``subtract'' the entire string Green's function $\mathbb E_1 (0,z;\tau)$, not just the point-particle Green's function $G_{\rm particle}$.  (``Subtract'' in quotation marks since just like in \eqref{point} we merely split up an expression in two parts that by themselves would be divergent, we did not subtract anything by hand.)


Going further back in the literature,
Sugawara \cite{Sugawara:2002rs}  computed
the partition function of the gravitational plane wave from the functional integral, in his equation (2.34) 
\begin{equation*}
Z_m(\tau)={1 \over \prod_{r, \ell  \in {\mathbb Z}} \tau_2(\tau_2^{-2}|r\tau+ \ell|^2+m^2)}
\end{equation*}
which implies that 
\begin{equation*}
\mathrm{Log} (Z_m(\tau))=- \sum_{r,\ell\in {\mathbb Z}}\log\left( \tau_2\left(\tau_2^{-2}|r\tau + \ell|^2+m^2\right)\right) \; . 
\end{equation*}
If we regulate this expression by differentiating twice with respect to $m$,
it looks like the double sum representation in \eqref{WDHoker}. But as expected,
this formal partition function calculation does not tell us how to traverse the divergence.  In \eqref{Etilderep},
we use a twist regularization and analytic continuation. 

 Finally, we note that there is an extensive literature on related topics. 
For example, in statistical field theory, similar objects were used
to compute renormalization group dependence of the conformal field theory central charge \cite{SI,Itzykson:1989sy}.
The Helmholtz equation with periodic boundary conditions has many other applications, e.g.\ in waveguide physics (see e.g.\ \cite{Linton}).

\section{Outlook}\label{Outlook}
It would be interesting to continue the discussion of modular graph functions (and forms)
from Section \ref{modulargraph}.
This should be feasible using the tools given in this paper. 

We now give another source for constructing more examples in the future. Note that we can interpret the proof of \eqref{Translaw} in \cite{BGG} as giving the expression
\begin{multline}\label{mess}
\log\(\mathcal F_{m}(\tau_2)\)
\\=2\pi \tau_2c_{m}+\frac{2\pi}{\tau_2}c_{m\tau_2}-\frac{1}{4}\!
\int_0^\infty \!\!\!\!e^{-\frac{\pi \tau_2 m^2}{s}}\!\(\vartheta_3\!(\I \tau_2 s)-1\)\(\!\vartheta_3\!\(\tfrac{\I s}{\tau_2}\)-1\!\)\!ds,\end{multline}
where $\vartheta_3(\tau):=\sum_{n\in\bbZ}q^{\frac{n^2}{2}}$. Equation \eqref{mess}
 is invariant under the simultaneous transformations $\tau_2\mapsto \tfrac{1}{\tau_2}$,
$m\mapsto m\tau_2$.
An obvious generalisation of \eqref{mess} is to replace $\vartheta_3$ with 
functions $f,g$ satisfying, for all $\tau_2 \in \R^+$,
$$f\(\tfrac{\I}{\tau_2}\)=\tau_2^k F(\I \tau_2),\quad g\(\tfrac{\I}{\tau_2}\)=t^k G(\I \tau_2)$$
for some $k\in\tfrac{1}{2}\Z$ and any functions $F,G$.
If $f$ and $g$ are Jacobi theta functions with characteristic,
then we recover the  special case $\tau=i \tau_2$  of $Z_{\alpha,\beta,m}(\tau)$. It might be possible to construct new massive Maass forms by a clever
choice of $f,g$, but we do not investigate this question in this paper.

Another question is: can we embed the massive deformation in some more general and familiar framework?
We believe  that there is a sense in which the massive deformations for SL${}_2(\bbR)$ ``sit'' inside automorphic forms
for more general Lie groups. We give some remarks on this without any pretense of rigor. 
The idea is that the results in the main text of this paper may be reproduced and generalized
by what  is called   warped Kaluza-Klein reduction in physics, for example from ordinary (massless) automorphic forms
on SL${}_3(\bbR)$ or Sp${}_2(\bbR)$
to massive automorphic forms
on SL${}_2(\bbR)$. 
To illustrate this, let us  view the usual Fourier expansion 
of the non-holomorphic Eisenstein series for SL${}_2(\bbR)$ as a warped Kaluza-Klein reduction to massive automorphic form on SL${}_2(\bbR)/(\tau_1\sim \tau_1+1)$.
This means we make an Ansatz for each nonzero-mode part of its Fourier expansion
that $\mathbb E_s\big|_{\tau_1 \, \text{piece}}  \propto f_m(\tau_2)e^{2\pi i m \tau_1}$, so
that the Laplacian in $\tau$ yields
\begin{eqnarray*}
\Delta_{\tau,0}(\mathbb E_s)\big|_{\tau_1 \, \text{piece}}
& \supset& -\tau_2^2\left(\partial_{\tau_2}^2 + \partial_{\tau_1}^2\right) 
f_m(\tau_2)e^{2\pi i m \tau_1}\\
&=&  -\tau_2^2\left(\partial_{\tau_2}^2 -4\pi^2 m^2\right) f_m(\tau_2)e^{2\pi i m \tau_1}
\end{eqnarray*}
which can be viewed as a ``massive''  one-dimensional differential operator in $\tau_2$ only. 
It is not truly the Helmholtz operator, since the ``mass'' when multiplying out is $4\pi^2 m^2\tau_2^2$, i.e., it depends on the vertical position $\tau_2$
in the upper half-plane in these coordinates.  
Demanding that $\mathbb E_s$ is an eigenfunction of $\Delta_{\tau,0}$ with eigenvalue $-s(s-1)$ produces
a differential equation purely in $\tau_2$,
\begin{equation*}
 \tau_2^2\left(\partial_{\tau_2}^2 -4\pi^2 m^2\right) 
f_m(\tau_2) = s(s-1)f_m(\tau_2)
\end{equation*}
which is a Bessel differential equation like \eqref{BesselODE}.
Demanding that $f_m(\tau_2)$ falls off as $\tau_2\rightarrow \infty$ yields specifically that
$f_m(\tau_2)= c_m \sqrt{\tau_2}K_{s-\frac12}(2\pi m \tau_2)$, with some constant $c_m \in \C$. This
is the standard result for the nonzero-mode Fourier expansion
of the usual SL${}_2(\bbR)$ Eisenstein series. 
We call this reduction from SL${}_2(\bbR)$ to SL${}_2(\bbR)/(\tau_1\sim \tau_1+1)$ ``warped'' essentially because the ``mass''
 $4\pi^2 m^2\tau_2^2$ depends on  $\tau_2$, unlike in the Helmholtz
equation. One  can find a  coordinate
system on the upper half-plane where the mass is constant, but in this (trivial) example it is not necessary to do so
to find $f_m(\tau_2)$. 

Pursuing this further is beyond the scope of this paper, but a first glimpse can be seen in 
Kiritsis--Pioline \cite{Kiritsis:1997em}, Appendix A, with the Eisenstein series for SL${}_3(\bbR)$.
The non-zero-mode terms in their equation (A.4) resemble our Proposition \ref{E1muFourier}. 
It would also be interesting to study the connection to Niebur--Poincar\'e series
 \cite{Angelantonj:2015rxa}, where Siegel--Narain theta functions provide multi-parameter families
 of automorphic forms. The parameters in the Siegel--Narain theta function comprise supersymmetric  Calabi--Yau moduli space with zero flux. The gravitational wave background has nonzero flux that produces the worldsheet mass term,
 so in that sense mass deformation in this paper can be thought of as a more drastic change
 than moving around in Calabi--Yau moduli space.  \appendix

\section{Review of the Kronecker--Eisenstein series $\mathbb E_s(w,z;\tau)$}
\label{Ereview}

In Appendix A, we review material of Appendix E in \cite{Berg:2014ama}.
We begin with the question of finding a (non-holomorphic)
function $\mathbb E_s: {\mathbb C}^2 \times {\mathbb H} \rightarrow   {\mathbb C}$ that
depends on a parameter $s\in\C$ and is doubly periodic on the torus in the first variable $w$
\begin{equation} \label{E1}
\mathbb E_s(w+1,z;\tau)= \mathbb E_s(w,z;\tau)  \; , \quad
\mathbb E_s(w+\tau,z;\tau)=\mathbb E_s(w,z;\tau) .
\end{equation}
In the second variable $z$ it should be quasiperiodic, namely
\begin{equation} \label{E2}
\mathbb E_s(w,z+1;\tau)= e^{2\pi i A}\mathbb E_s(w,z ;\tau),  \ \ 
\mathbb E_s(w, z+\tau;\tau)=e^{-2\pi i B}\mathbb E_s(w,z;\tau),
\end{equation}
where $w=A\tau +B$ ($A,B\in\R$). In physics, expressions corresponding to Feynman graphs are composed 
of Green's function of the Laplace equation. For the torus, allowing characteristics as in \eqref{E2}, we have
\begin{equation} \label{E3}
\Delta_{z,0,0} (G(z;\tau))=-2\pi e^{{2\pi i \over \tau_2}{\rm Im} (w\overline{z})} \delta^{[2]}(z;\tau).
\end{equation}
 The factor in front of $\delta^{[2]}(z;\tau)$ ensures compatibility with quasiperiodicity. 

We now define the \textit{Kronecker--Eisenstein series}\footnote{This has an additional $\Gamma(s)$ as compared to \cite{Berg:2014ama}, which makes it ``completed'' in the sense of the reflection formula \eqref{E4} below.} ($s\in\C$ with ${\rm Re}(s)\geq1$)
\begin{eqnarray}
\mathbb E_s(w,z;\tau)&:=&\Gamma(s)
\left( {\tau_2 \over \pi}\right)^s \sideset{}{^*}\sum_{r, \ell} {e^{{2\pi i\over \tau_2}{\rm Im}((w+r\tau+\ell)\overline{z})} \over |w+r\tau+\ell|^{2s}} \nonumber \\
&=&\Gamma(s)\left( {\tau_2 \over \pi}\right)^s \sideset{}{^*}\sum_{r,\ell} {e^{2\pi i (r+A)\beta-2\pi i(\ell+B)\alpha} \over |(r+A)\tau+\ell+B|^{2s}}\label{Es},
\end{eqnarray}
where the summation $\Sigma^*$ indicates that the sum runs over all $(r,\ell) \in\Z^2$ such that $w+r \tau+\ell\neq 0$.
It is not hard to see that $\mathbb E_{s}(w,z;\tau)$ satisfies the transformations in \eqref{E1} and \eqref{E2}. For the special case $w=0$ and $s=1$, it is also easy to see that $\mathbb E_1(0, z; \tau)$
does not quite satisfy \eqref{E3} but rather \eqref{LaplaceG}, with an extra
term that is required for the right-hand side to yield zero when integrated over a fundamental domain in $z$. When
$w\neq 0$ there is quasiperiodicity as in \eqref{E2}, and the Laplace operator no longer integrates to 
zero as it does for a doubly periodic function. That is why the extra term on the right-hand side is not needed in \eqref{E3}.

Note that, by Kronecker's second limit formula, $\mathbb E_1(0,z;\tau)$ can alternatively be represented as 
(see e.g.\  Chapter 20 of \cite{Lang} or Section 5 of \cite{Siegel})
\begin{equation} \label{Estheta}
\mathbb E_1(0,z;\tau) = -\log\left(\left|{\vartheta_1(z;\tau) \over \eta(\tau)}\right|^2\right)
+ {2\pi z_2^2 \over \tau_2}.
\end{equation}

The double sum \eqref{Es} is only absolutely convergent for Re$(s)>1$, but an analytic continuation to all complex $s$
can be found as an integral representation. 
If either $z$ or $w$ are  lattice points  there are additional pole terms at $s=0$ or $s=1$ in this integral representation, that are written out in Appendix E of \cite{Berg:2014ama}, but in this calculation we for simplicity stay away from lattice points.
The integral (Mellin) representation is then found from
\begin{equation*}
\mathbb E_s(w,z;\tau) = \int_0^{\infty} x^{s-1}
\sideset{}{^*}\sum_{r,\ell}e^{-{\pi x  \over \tau_2} |w +r\tau+\ell|^2 +{2\pi i \over \tau_2}{\rm Im}((w +r\tau +\ell)\overline{z})} dx,
\end{equation*}
which is valid for Re$(s)>1$. 
The sum is exponentially decaying for $x\to \infty$ but for $s\leq 1$ it is potentially divergent towards $x \to 0$. 
We follow the approach of Riemann, namely to split the integral into one from 0 to 1 and one from 1 to $\infty$, then use the modular transformation of a theta function on the first piece and change variables $x \mapsto \tfrac1x$ to obtain
\begin{multline} \label{intrep}
\mathbb E_s(w,z;\tau) =
e^{{2\pi i \over \tau_2}{\rm Im}(w\overline{z})} \int_1^{\infty} x^{-s}
\sideset{}{^*}\sum_{r,\ell}e^{-{\pi x  \over \tau_2} |z +r\tau +\ell|^2 +{2\pi i \over \tau_2}{\rm Im}((z +r\tau+ \ell )\overline{w})}dx\\
+\int_1^{\infty}x^{s-1}
\sideset{}{^*}\sum_{r,\ell}e^{-{\pi x \over \tau_2} |w +r\tau +\ell|^2 +{2\pi i \over \tau_2}{\rm Im}((w+r\tau+\ell)\overline z)} dx.
\end{multline} 
Although we are originally assuming that Re$(s)>1$, this integral representation gives an analytic continuation (in $s$) of $\mathbb E_s(w,z;\tau)$. Moreover it directly implies a symmetry under $s\mapsto 1-s$, 
\begin{equation} \label{E4}
\mathbb E_s(w,z;\tau)=e^{{2\pi i \over \tau_2}{\rm Im}(w\overline{z})}\mathbb E_{1-s}(z,w;\tau)
\end{equation}
which is the functional relation (reflection formula) for $\mathbb E_s(w,z;\tau)$. 
Note that the variables $w$ and $z$ are switched,
which gives a motivation to allow  $w\neq 0$ in the first place. Note that (as is familiar from discussions of L-functions, but here $\mathbb E_s(w,z;\tau)$ depends on $\tau$) the two sides of \eqref{E4} never simultaneously  have convergent double sum representations,
and that the reflection formula does not give any information on the behavior of the double sums 
in the strip $0<{\rm Re}(s)<1$, which is instead
provided by the integral representation. 

If $z\notin {\mathbb Z}\tau + {\mathbb Z}$, $\mathbb E_s(w,z;\tau)$ satisfies the partial differential equation
where we view $z$ and $\tau$ as independent variables,
\begin{equation}\label{DeltaEs}
\Delta_{z,0,0}(\mathbb E_s(w,z;\tau)) =-2\pi(s-1) \mathbb E_{s-1}(w,z;\tau) \; . 
\end{equation}

The  ``twisted'' (quasiperiodic) Kronecker--Eisenstein series has a factor in front of the delta function
that allows for quasiperiodicity:
\begin{equation} \label{deltatwisted}
\Delta_{z,0,0}\left(\mathbb E_1(w,z;\tau)\right) = -2\pi e^{{2\pi i\over \tau_2}{\rm Im}(w\overline{z})} \delta^{[2]}(z;\tau) \;. 
\end{equation}
A formal power series representation of the lattice delta function is 
\begin{equation}
\label{delta}
\delta^{[2]}(z;\tau)=\sum_{r,\ell \in {\mathbb Z}}e^{2\pi i (r\beta-\ell \alpha)}.
\end{equation}
The factor in front of the lattice delta function in \eqref{deltatwisted}
may seem inconsequential, since the right-hand side is zero away from lattice points and at $z=0$ the factor is one,
but it can be nontrivial at lattice points away from the origin. We do not
attempt to write nonzero-index distributions in detail here. 

Note that $\mathbb E_s(z,w;\tau)$ has weight zero, but it can be used to generate objects with non-zero weight, 
e.g.\ those called $\mathbb E_{s,k}(z,w;\tau)$ in Appendix E of \cite{Berg:2014ama}. We do not discuss them here.

\end{document}